\documentclass{article}
\usepackage[german, english]{babel}
\usepackage{amssymb,amsmath}
\usepackage{amsmath, amsthm}
\usepackage{amsfonts,epsfig}
\usepackage{tikz-qtree}
\usepackage{hyperref}

\usepackage[margin=1.1in]{geometry}
\newtheoremstyle{theorem}
  {10pt}		  
  {10pt}  
  {\sl}  
  {\parindent}     
  {\bf}  
  {. }    
  { }    
  {}     
\theoremstyle{theorem}
\newtheorem{theorem}{Theorem}

\newtheorem{proposition}{Proposition}

\newtheorem{definition}{Definition}

\newtheoremstyle{defi}
  {10pt}		  
  {10pt}  
  {\rm}  
  {\parindent}     
  {\bf}  
  {. }    
  { }    
  {}     

\newcommand{\SEPs}{{\mathcal{E}}}

\newcommand{\1}{\mathbf{1}}
\newcommand{\naturals}{\mathbb{N}}
\newcommand{\integers}{\mathbb{Z}}

\newcommand{\complex}{\mathbb{C}}

\newcommand{\ISCs}{\mathfrak{C}}
\newcommand{\per}{\ell}

\newcommand{\I}{\mathbb{I}}
\newcommand{\card}{{\rm card}}

\newcommand{\2}{\textit{\large 2}}
\newcommand{\A}{{\rm \mathbf{A}}}
\newcommand{\Hes}{{\rm \mathbf{H}}}

\newcommand{\boldxi}{{\mbox{\boldmath$\xi$}}}
\begin{document}

\bibliographystyle{plain}

\title{ \Large{\textbf{A closed form to the general solution of linear \\ difference equations with variable coeffcients}}\vspace{0.1in}\\
}

\author{A. G. Paraskevopoulos and M. Karanasos}

\maketitle
\begin{abstract} 
The determinant of a lower Hessenberg matrix (Hessenbergian) is expressed as a sum of signed elementary products indexed by initial segments of nonnegative integers.
A closed  form alternative to the recurrence expression of Hessenbergians is thus obtained. This result further leads to a closed form of the general solution for regular order linear difference equations with variable coefficients, including equations of $N$ order and equations of ascending order. \\

{\bf AMS Subject Classification:} 39A10\\

{\bf Key Words and Phrases:}
Difference Equation, Linear Recurrence, Variable Coefficients, \emph{N}th Order, Closed Form Solution, Hessenbergians.
\end{abstract}

\section{Introduction} \label{intro-sec} 
Higher order linear difference equations with time varying coefficients (LDEVCs) and their solutions come to focus, because of their ability to capture and model the dynamics of natural and social phenomena including abrupt and structural changes.  An explicit expression for the general solution of the second order LDEVC was presented by Popenda in \cite{Pop:Exp}. A representation of their general solution in terms of a single matrix determinant was established by Kittappa in \cite{Kit:Rep}.
Closed form solutions for homogeneous LDEVC of order $N\ge 2$ have been presented by Mallik in \cite{Ma:SHo,Ma:Ho}, who also provides, in \cite{Ma:Exp}, an explicit expression for the general solution of the non-homogeneous case. Despite their theoretical significance such solution expressions have not been utilized in scientific modelling. A closed form for the general solution of LDEVC of order greater than $1$ is a long-standing problem (see \cite{El:Int}).

It has been established,   in~\cite{Pa:IGJ}, that the infinite Gauss-Jordan algorithm under a rightmost pivot elimination strategy constructs the general solution sequence of row-finite systems. This type of infinite linear systems was utilized to represent LDEVCs of regular and irregular order. Furthermore, the class of ascending order LDEVCs has been introduced in order to extend the class of $N$th order equations to cover regular order LDEVCs. It has been shown that the application of the infinite Gaussian elimination algorithm to a LDEVC of regular order generates solutions (general homogeneous and non-homogeneous) in terms of Hessenbergians. Applying the solution formula to the first order LDEVC, the well known closed form solution (see \cite{El:Int}) is recovered. Applying the same formula to the $N$th order LDEVC, the general solution obtained in \cite{Kit:Rep} is also recovered.

In this paper, we present an alternative expression to the recurrence formula (see (\ref{determinant of LHM})) for the $n$th order Hessenbergian, in closed form (see formula (\ref{closed form})). 
Unlike in the Leibniz formula for determinants, which consists of $n!$  signed elementary products (SEPs) and in which the sum variable ranges over the symmetric group of permutations, the expression obtained here is a sum of $2^{n-1}$ (non-trivial) SEPs in which the sum variable ranges over the set of integers in the initial segment $[0,2^{n-1}-1]$. This is due to an expression of each SEP as an image of a composite $\chi^{(n)}$ of two bijections, $\varphi^{(n)}$ and $\tau^{(n)}$. 
More specifically, we take advantage of the special structure of non-trivial SEPs associated with Hessenbergians (see section \ref{sec:StructuralAnalysisOfLowerHessenbergDeterminants}) to obtain (in section \ref{sec:Representation of Non-trivials SEPs by 0s and 1s})\hspace{0.01in} a direct representation of these SEPs as $n$-dimensional arrays of $0$s  and $1$s through a bijection $f^{(n)}$. The set of such arrays can be viewed as the set consisting of binary representations of the integers in $[0,2^{n-1}-1]$, denoted by ${\mathcal{B}}_{n-1}$. The function $\varphi^{(n)}$ stands for the inverse of $f^{(n)}$, which maps each $r\in{\mathcal{B}}_{n-1}$ to the SEP $\varphi^{(n)}(r)$. The bijection $\tau^{(n)}$ maps integers from $[0,2^{n-1}-1]$ to binaries in ${\mathcal{B}}_{n-1}$ and is described in terms of elementary integer functions involving the greatest and the modulo function (see (\ref{tau bijection})). 

Instead of applying the standard  transformation of the original LDEVC into another difference equation with new coefficients, as followed in~\cite{Ma:Ho}, we use the complete lower Hessenberg form of the solution matrices associated with the ascending order LDEVC. We thus obtain a closed form to the general solution for LDEVCs (see formula (\ref{closed form of general solution})) through the closed form expression of Hessenbergians.  The general solution of the $N$th order LDEVC is included as a special case.

The general solution of the $N$th order LDEVC and its closed form leads to the development of a unified theory for time series models with varying coefficients as established in \cite{PKD:UTh}. An application of this theory is the modelling of stock volatilities during financial crises as presented in~\cite{Ka:MRV}. Another application is the parsimonious formulation of periodic ARMA models \cite{PKD:UTA}.
\section{Hessenbergian solutions of linear difference equations}
\label{sec:SolutionsOfLinear DifferenceEquationsWithVariable CoefficientsInTermsOfHessenbergians}
Let $\integers$ (resp. $\integers^*$, $\integers^+$) be the set of integers (resp. non-negative integers, positive integers) and $\complex$ be the algebraic field of complex numbers.
The linear difference equation with variable coefficients (LDEVC) is defined by the recurrence\vspace{-0.07in}
\begin{equation} \label{GUO}
a_{n,0}y_{-N}+a_{n,1} y_{1-N}
+...+a_{n,N}y_{0}+a_{n,N+1}y_{1}+...+a_{n,N+n-1} y_{n-1}+a_{n,N+n} y_{n}=g_n,  \ n\in\integers^*,
\end{equation}
where $N$ is a fixed non-negative integer and $a_{n,i}, g_n\in \complex$ are values of arbitrary (complex valued) functions.

If $a_{n,N+n}\not=0$ for all $n\in\integers^*$ and $a_{m,0}\not=0$ for some $m\in\integers^*$ in (\ref{GUO}), then the LDEVC is referred to as \emph{ascending order linear difference equation} of index $N$.
The sequence of equations in (\ref{GUO}) can be written as an infinite\vspace{-0.07in} $\integers^*\times\integers^*$ linear system
\begin{equation} \label{MILS} \A\cdot y=g,
\end{equation}
where
\begin{equation} \label{coefficient matrix A} 
\A\!=\!\left(\!\!\! \begin{array}{cccccccccccc}
 a_{0,0} \!\! &\!\!\!  a_{0,1}  \!\!   &\!\!\! ... \!\!\! &\!\!  a_{0, N-1}\!\!    &\!\!\!   a_{0, N}     \! &\!\!\!  0  \!\!  &\!\!\!  ... \!\!     & \!\!\!   0   \!\!\!\!         &\!\!  0\!\!\!\!\! &...\\
 a_{1,0} \!\! &\!\!\!  a_{1,1}  \!\!   &\!\!\! ... \!\!\!    &\!\!  a_{1, N-1}\!\!   &\!\!\!  a_{1,N} \! &\!\!\! a_{1, N+1} \!\!      &\!\!\! ... \!\!    & \!\!\!     0 \!\!\!\!          &\!\!   0 \!\!\!\!\! &... \vspace{-0.05in}\\
\vdots      \!\! &\!\!\!   \vdots            \!\!   & \!\!\! \vdots \vdots \vdots  \!\!\!   &\!\!        \vdots    \!\!  &\!\!\!    \vdots       \! &\!\!\!  \vdots  \!\!   & \!\!\! \vdots \vdots \vdots  \!\!       &\!\!\!     \vdots        \!\!\!\!     &\!\!   \vdots \!\!\!\!\! & \vdots \vdots \vdots\vspace{-0.08in}\\
a_{n-1, 0}\!\! &\!\!\! a_{n-1, 1} \!\!  &\!\!\! ... \!\!\!   &\!\!  a_{n-1, N-1}\!\! &\!\!\! a_{n-1, N} \! &\!\!\! a_{n-1, N+1}\!\! &\!\!\! ... \!\!  &\!\!\!   a_{n-1, N+n-1}         \!\!\!\!    &\!\!   0\!\!\!\!\!& ... \\
 a_{n,0}  \!\!  &\!\!\! a_{n,1}  \!\! &\!\!\!  ... \!\!\! &\!\! a_{n, N-1} \!\!     &\!\!\! a_{n, N} \!\! &\!\!\!   a_{n, N+1}  \!\! &\!\!\! ... \!\!   &\!\!\! a_{n, N+n-1}   \!\!\!\!    &\!\!   a_{n, N+n}\!\!\!\!\!& ...\vspace{-0.05in}\\
 \vdots      \!\! &\!\!\!   \vdots            \!\!   & \!\!\! \vdots \vdots \vdots  \!\!\!   &\!\!        \vdots    \!\!  &\!\!\!    \vdots       \! &\!\!\!  \vdots  \!\!   & \!\!\! \vdots \vdots \vdots  \!\!       &\!\!\!     \vdots        \!\!\!\!     &\!\!   \vdots \!\!\!\!\! & \\
\end{array}\!\!\!  \right),
\end{equation}
$y=(y_{-N},y_{1-N},...,y_{-1},y_0,y_1,...)^T\in\complex^\infty$ and $g=(g_0,g_1,g_2,...)^T\in\complex^\infty$ (``$T$" stands for transposition).

If $a_{n,N+n}=0$ for some $n\in \integers^*$ and $a_{m,N+m}\not=0$ for some $m\in\integers^*$ with $m\not=n$ in (\ref{GUO}), the row-lengths of $\A$ can vary irregularly and (\ref{GUO}) is referred to as \emph{linear difference equation of irregular order}. Otherwise it would be referred to as \emph{linear difference equation of regular order}. The general solution sequence of equations of irregular order is constructed by implementing the infinite Gauss-Jordan algorithm under a rightmost pivot elimination strategy (see~\cite{Pa:IGJ}).

If $a_{n,N+n}\not=0$ for all $n\in\integers^*$, $a_{m,m}\not=0$ for some $m\in \integers^*$ and $a_{n,i}=0$ for all $i,n\in\integers^*$ such that $0\le i<n$, then (\ref{GUO}) turns into a \emph{linear difference equation with variable coefficients of order $N$}. Letting $N=0$ and $a_{n,n}\not=0$ for all $n\in \integers^*$, then (\ref{GUO}) turns into a linear difference equation with variable coefficients of \emph{unbounded order}, 
as named by Mallik in \cite{Ma:Exp}. In the terminology used herein, equations of unbounded order can be described as equations of ascending order of index $0$. Their matrix representation is non-singular, since it is lower triangular, with non-zero entries in the main diagonal. In this context, LDEVCs of ascending order and of constant order cover all LDEVCs of regular order. The most complete form of regular order LDEVCs is the ascending order one.

The coefficient matrix $\A$ in (\ref{coefficient matrix A}) associated with a LDEVC of regular order is in lower echelon form. In this case, we solely implement the infinite Gaussian elimination. This yields a unique row equivalent matrix of $\A$, say $\Hes$, called Hermite form (HF) of $\A$ (or lower row reduced echelon form of $\A$). The first $N$ opposite-sign columns of $\Hes$ augmented at their top by $N$ distinct unit vectors, turn out to be linearly independent homogeneous solution sequences of the ascending order LDEVC. As a consequence, an ascending order LDEVC of index $N$ has, as in the case of LDEVCs of $N$-order  (see \cite{El:Int}), $N$ linearly independent homogeneous solutions that span the space of homogeneous solutions, thus forming an algebraic basis of this space. This basis will be denoted by $\boldxi=\{\boldxi_i, \ 0\le i\le N-1\}$. Formally $\boldxi$ extends the notion of the \textit{fundamental solution set} associated with the $N$-order LDEVC (see \cite{El:Int}).

For every $i$ such that $0\le i\le N-1$ the fundamental solution matrix $\mbox{\boldmath$\rm \Xi$}_n^{(i)}$ is the lower Hessenberg matrix:
\[ \mbox{\boldmath$\rm \Xi$}  _n^{(i)}=
\left(\begin{array}{ccccc}
a_{0,i} &  a_{0,N}    &\!\!\!\!  0         &  ...  & 0             \\
a_{1,i}  & a_{1,N}    &  a_{1,N+1}  &  ...  & 0             \vspace{-0.07in}\\
. &     .         &  .        & ...  & .            \vspace{-0.13in} \\
.&    .         &   .        &  ...  & .            \vspace{-0.13in} \\
.&    .         &   .        &  ...  & .             \vspace{-0.07in}\\
a_{n-1,i} &   a_{n-1,N}  &  a_{n-1, N+1} &  ...  & a_{n-1, N+n-1}       \\ 
a_{n, i} &  a_{n,N}    & a_{n, N+1}   &  ...  & a_{n, N+n-1} \end{array}\right).
\]
The set $\boldxi$ of fundamental solution sequences consists of the sequences
\[\begin{array} {clccccl}
\boldxi_0     &=(1,0,...,0, & \xi_{0,0},  &\xi_{1, 0},  &... & \xi_{n,0}, &...)^T \\
\boldxi_1     &=(0,1,...,0, & \xi_{0,1},  &\xi_{1, 1},  &... & \xi_{n, 1}, &...)^T \\
  .        & .           &    .      &   .       &... &    .      &...\vspace{-0.1in}\\
  .        & .           &    .      &   .       &... &    .      &...\vspace{-0.1in}\\
  .        & .           &    .      &   .       &... &    .      &...\\
\boldxi_{N-1} &=(0,0,...,1, & \xi_{0,N-1},  &\xi_{1, N-1},  &... & \xi_{n, N-1}, &...)^T,
\end{array} \]
where the general term $\xi_{n,i}$, referred to as \emph{fundamental solution}, is given by
\[\xi_{n,i}=(-1)^{n+1}\frac{\det(\mbox{\boldmath$\rm \Xi$}  _n^{(i)})}{a_{0,N}\cdot a_{1,N+1}...a_{n, n+N}}\ \ n\ge 0.
\]

The general term of the particular solution sequence
$P=(0,0,...,0,p_0,p_1,...,p_n,...)^T$
of the ascending order LDEVC, referred to as \emph{particular solution},  is given by 
\begin{equation}\label{n-term particular solution} p_{n}=(-1)^n\frac{\det(\mbox{\boldmath$\rm P$}_n)}{a_{0,N}\cdot a_{1,N+1}...a_{n, n+N}}, \ \ n\ge 0,\end{equation}
where
\[\mbox{\boldmath$\rm P$}_n=
\left(\begin{array}{ccccc}
g_{0}  &\!\!\!\! a_{0,N}    &\!\!\!\!  0         &\!\!\!\!\!  ... \!\!\!\!\! & 0             \\
g_{1}  &\!\!\!\! a_{1,N}    & \!\!\!\! a_{1,N+1}  &\!\!\!\!\!  ... \!\!\!\!\! & 0             \vspace{-0.05in}\\
\vdots &\!\!\!\!     \vdots         & \!\!\!\!  \vdots        &\!\!\!\!\!  \vdots\vdots\vdots \!\!\!\!\! & \vdots              \vspace{-0.05in}\\
g_{n-1} &\!\!\!\!   a_{n-1,N}  & \!\!\!\! a_{n-1, N+1} &\!\!\!\!\!  ... \!\!\!\!\! & a_{n-1, N+n-1}       \\ 
g_{n} &\!\!\!\!  a_{n,N}    & \!\!\!\! a_{n, N+1}   &\!\!\!\!\!  ... \!\!\!\!\! & a_{n, N+n-1} \end{array}\right).
\]
The general solution sequence of the ascending order LDEVC, as the sum of the homogeneous  (a linear combination of fundamental solutions) and particular solutions, is given by
\begin{equation} \label{general solution}
y=(y_{-N},\! ...,y_{-1}, p_0+\sum^{N-1}_{k=0}\xi_{0,k}y_{k-N},  p_1+\sum^{N-1}_{k=0}\xi_{1,k}y_{k-N},...)^T, 
\end{equation}
where $y_{-N},...,y_{-1}$ are arbitrary constants. The \emph{general solution matrix} associated with the ascending order LDEVC is given by the lower Hessenberg matrix
\begin{equation}\label{GSMat}
\mbox{\boldmath$\rm G$}_n=
\left(\begin{array}{ccccc}
\displaystyle g_{0}\!-\!\sum_{k=0}^{N-1}a_{0,k}\ y_{k-N} &\!\! a_{0,N}    &\!\!  0         &\!\!\!  ... \!\!\! & 0  \\
\displaystyle g_{1}\!-\!\sum_{k=0}^{N-1}a_{1,k}\ y_{k-N}   &\!\! a_{1,N}    &\!\! a_{1,N+1}  &\!\!\!  ... \!\!\! & 0 \\
. &\!\!     .         &\!\!  .        &\!\!\!  ... \!\!\! & .            \vspace{-0.13in} \\
.&\!\!    .         & \!\!  .        &\!\!\!  ... \!\!\! & .            \vspace{-0.13in} \\
.&\!\!    .         & \!\!  .        &\!\!\!  ... \!\!\! & .             \\
\displaystyle g_{n-1}\!-\!\sum_{k=0}^{N-1}a_{n-1,k}\ y_{k-N} &\!\!   a_{n-1,N}  & \!\! a_{n-1, N+1} &\!\!\!  ... \!\!\! & a_{n-1, n+N-1}       \\ 
\displaystyle g_{n}\!-\!\sum_{k=0}^{N-1}a_{n,k}\ y_{k-N}  &\!\!  a_{n,N}    & \!\! a_{n, N+1}   &\!\!\!  ... \!\!\! & a_{n, n+N-1} \end{array}\right).
\end{equation}
As a result of the multilinear property of determinants, the $n$th (or general) term  $y_n=p_n+\displaystyle\sum^{N-1}_{k=0}\xi_{n,k}y_{k-N}$ of $y$ in (\ref{general solution}), referred to as \emph{general solution}, is represented in terms of Hessenbergians as follows:
\begin{equation} \label{n-term general solution}
 y_n=(-1)^n\frac{\det(\mbox{\boldmath$\rm G$}_n)}{a_{0,N}\cdot a_{1,N+1}...a_{n, n+N}}.
\end{equation}
A major objective of the present work is to provide a closed form expression for $\det(\mbox{\boldmath$\rm G$}_n)$.
\section{Hessenbergians and non-trivial signed elementary products}
\label{sec:StructuralAnalysisOfLowerHessenbergDeterminants}
Let $S_n$ be the group of permutations on $\{1,2,...,n\}$, known as symmetric group of order $n$. The signature $sgn(\per)$ of $\per\in S_n$ is defined as $-1$ if $\per$ is odd and $+1$ if $\per$ is even.  Let  $\per\in S_n$. A \emph{signed elementary product} (SEP) of a square matrix $\A=(a_{i,j})_{1\le i,j \le n}$ over $\complex$ is an ordered pair 
$\displaystyle(\per,sgn(\per)\prod_{i=1}^{n}a_{i,\per_i})$, that is an element of $S_n\times\complex$. The second component of a SEP is its \emph{product value} in $\complex$. We infer that two SEPs $(\per,sgn(\per)\prod_{i=1}^{n}a_{i,\per_i})$ and  $(l,sgn(l)\prod_{i=1}^{n}a_{i,l_i})$ of $\A$ are equal if and only if $\per=l$.
Bearing this fact in mind, we shall use the standard notation of SEPs:
$sgn(\per)a_{1,\per_1}a_{2,\per_2}...a_{n,\per_n},\ \ \per\in S_n$.
The set of SEPs of $\A$ is in one-to-one correspondence with $S_n$. Therefore the number of all SEPs of $\A$ coincides with the number of all permutations on $n$ objects, that is $\card(S_n)=n!$. The determinant of $\A$ is built out of the SEPs of $\A$, according to the Leibniz formula:  
\begin{equation} \label{Leibniz formula} \det(\A)=\sum_{\per\in S_n} sgn(\per)\prod_{i=1}^{n}a_{i,\per_i}\end{equation}
The first factor of a SEP could be any entry, say $a_{1,\per_{1}}$, from the $n$ entries of the first row of $\A$. Taking into account that $\per$ is bijective, the factor $a_{j,\per_j}$ of a SEP could be any entry from the $n-j+1$ entries of the $j$th row of $\A$ satisfying $\per_j \not=\per_1, \per_j \not=\per_2,...,\per_j \not=\per_{j-1}$.

The factors of a SEP are the nodes of the tree connected with branches, as partly displayed below:\vspace{0.1in}
 
\hspace{0.9in}\begin{tikzpicture}
\tikzset{level distance=40pt}
\tikzset{every tree node/.style={align=center,anchor=north}}
\Tree [ [.$a_{11}$ $a_{22}$\\{\vdots} $a_{23}$\vspace{-0.05in}\\{\vdots} {\ldots} $a_{2n}$\vspace{-0.05in}\\{\vdots} ] [.$a_{12}$ $a_{21}$\vspace{-0.05in}\\{\vdots} $a_{23}$\vspace{-0.05in}\\{\vdots} {\ldots} $a_{2n}$\vspace{-0.05in}\\{\vdots} ] {\ldots} [.$a_{1n}$ $a_{21}$\vspace{-0.05in}\\{\vdots} $a_{22}$\vspace{-0.05in}\\{\vdots} {\ldots} $a_{2\, n-1}$\vspace{-0.05in}\\{\vdots}  ]  ]
\end{tikzpicture}\vspace{-0.15in}\\

The $n$th order lower Hessenberg matrix over $\complex$ is an $n\times n$ matrix 
$\Hes_n=(h_{i,j})_{1\le i,j\le n}$ whose entries above the superdiagonal, called \emph{trivial}, are all zero. That is, $h_{i,j}=0$, whenever $j-i>1$, as displayed below:
\begin{equation} \label{Hessenberg Matrix}
 \Hes_n=\left(\begin{array}{cccccc}
  h_{1,1}      &  h_{1,2}   &   0        & ... &   0      & 0 \\
 h_{2,1}      &  h_{2,2}   &   h_{2,3}  & ... &   0      & 0  \vspace{-0.05in}\\
   \vdots     &  \vdots  &    \vdots        & \vdots\vdots\vdots &   \vdots           & \vdots  \vspace{-0.05in}\\
  h_{n-2, 1} & h_{n-2,2}& h_{n-2,3} & ... & h_{n-2, n-1} & 0 \\
  h_{n-1, 1} & h_{n-1,2}& c_{n-1,3} & ... & h_{n-1, n-1} & h_{n-1, n} \\
  h_{n,1}      &  h_{n,2}   & h_{n,3}     & ... & h_{n, n-1}     & h_{n, n} \end{array}\right).
\end{equation}
$\Hes_n$ can be considered as the $n$th term of the infinite chain of lower Hessenberg matrices\vspace{-0.02in}
\begin{equation}\label{chain of Hess}
\Hes_1\sqsubset\Hes_2\sqsubset...\sqsubset\Hes_n\sqsubset...
\end{equation}
where the notation $\Hes_n\sqsubset\Hes_{n+1}$ means that $\Hes_n$ is a top submatrix of $\Hes_{n+1}$.

The determinant of $\Hes_n$ for $n\ge 2$, known as \emph{Hessenbergian}, satisfies the recurrence
\begin{equation} \label{determinant of LHM}
\det(\Hes_n)=h_{n,n}\det(\Hes_{n-1})+\sum_{k=1}^{n-1}\left((-1)^{n-k}h_{n,k}\prod_{i=k}^{n-1}h_{i,i+1}\det(\Hes_{k-1})\right),
\end{equation}
where $\det(\Hes_0)=1$ and $\det(\Hes_1)=h_{1,1}$ (for a proof of the recurrence formula (\ref{determinant of LHM}) see \cite{Na:fd}).

Notice that zero entries (if any) below and including the entries of the superdiagonal are all non-trivial. A SEP of $\Hes_n$ will be called \emph{non-trivial} if it exclusively consists of non-trivial entries.  Throughout the paper the set of non-trivial SEPs  associated with $\det(\Hes_n)$ is denoted by $\SEPs_n$. 
\subsection{Hessenbergian recurrence in terms of non-trivial SEPs}
\label{sec:Recurrence Formula}
The non-trivial entries of a Hessenberg matrix $\Hes_n$ positioned below and including the main diagonal, namely $c_{i,j}=h_{i,j}$ for $j\le i$, will be called \emph{standard factors}, while the opposite-sign non-trivial entries in the superdiagonal of $\Hes_n$, namely $c_{i,i+1}=-h_{i,i+1}$, will be called \emph{non-standard factors}.
We shall also use the alternative notation to $\Hes_n$:
\begin{equation} \label{Modified Hessenberg Matrix}
 \Hes_n=\left(\begin{array}{cccccc}
c_{1,1}      & -c_{1,2}   &   0        & ... &   0           & 0  \\
c_{2,1}      & \ \  c_{2,2}  &   -c_{2,3}  & ... &   0           & 0  \\
 .          &  .        &   .        & ... &   .           & .  \vspace{-0.1in}\\
 .          &  .        &   .        & ... &   .           & .  \vspace{-0.1in}\\
  .          &  .        &   .        & ... &   .           & .  \\
c_{n-2, 1} & c_{n-2,2}& c_{n-2,3} & ... &  -c_{n-2, n-1} & 0 \\
c_{n-1, 1} & c_{n-1,2}& c_{n-1,3} & ... &\ \ \; c_{n-1, n-1} & -c_{n-1, n} \\
 c_{n,1}      &  c_{n,2}   & c_{n,3}     & ... & c_{n, n-1}     & c_{n, n}
\end{array}\right).
\end{equation}
The following Proposition will make clear the usefulness of the matrix form (\ref{Modified Hessenberg Matrix}).
\begin{proposition}\label{positively signed} 
\textit{i}\emph{)} Let $C$ be a non-trivial \emph{SEP} of $\Hes_n$ and $\textbf{c}$ be the number of non-standard factors of $C$. The expansion of \emph{(\ref{determinant of LHM})}, when written in terms of \emph{SEPs}, comprises entirely non-trivial \emph{SEPs}. Moreover any arbitrary non-trivial \emph{SEP} in this expansion, say $C=sgn(\per)h_{1,\per_1}h_{2,\per_2}...h_{n,\per_n}$, satisfies
$sgn(\per)=(-1)^\textbf{c}$, that is\vspace{-0.05in}
\begin{equation}\label{non-trivial seps} C=c_{1,\per_1}c_{2,\per_2}...c_{n,\per_n}.\end{equation}
\textit{ii}\emph{)} The number of non-trivial \emph{SEPs} of $\det(\Hes_n)$ is $2^{n-1}$.
\end{proposition}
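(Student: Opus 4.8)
The plan is to proceed by induction on $n$, starting from the recurrence (\ref{determinant of LHM}) rewritten in the ``standard/non-standard factor'' notation of (\ref{Modified Hessenberg Matrix}). The base case $n=1$ is immediate: $\det(\Hes_1)=h_{1,1}=c_{1,1}$ is a single non-trivial SEP with no non-standard factors and signature $(-1)^0=+1$. The engine of the induction is the identity $(-1)^{n-k}\prod_{i=k}^{n-1}h_{i,i+1}=\prod_{i=k}^{n-1}(-h_{i,i+1})=\prod_{i=k}^{n-1}c_{i,i+1}$, together with $h_{n,n}=c_{n,n}$ and $h_{n,k}=c_{n,k}$ for $k\le n-1$; feeding these into (\ref{determinant of LHM}) turns it into $\det(\Hes_n)=c_{n,n}\det(\Hes_{n-1})+\sum_{k=1}^{n-1}c_{n,k}\bigl(\prod_{i=k}^{n-1}c_{i,i+1}\bigr)\det(\Hes_{k-1})$.

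For the inductive step I would expand each sub-Hessenbergian by the inductive hypothesis into a sum of non-trivial SEPs in $c$-form; the $k$th summand then contributes products $c_{1,\ell'_1}\cdots c_{k-1,\ell'_{k-1}}\cdot c_{k,k+1}\cdots c_{n-1,n}\cdot c_{n,k}$, with $\ell'$ running over the permutations of $\{1,\dots,k-1\}$ underlying the SEPs of $\Hes_{k-1}$ (and the leading term $c_{n,n}\det(\Hes_{n-1})$ is the analogous case in which $n$ is fixed). Three things then need checking. First, each such product is a genuine SEP of $\Hes_n$: it corresponds to the permutation $\ell$ that agrees with $\ell'$ on $\{1,\dots,k-1\}$, sends $i\mapsto i+1$ for $k\le i\le n-1$, and sends $n\mapsto k$; since $\ell_i\le i+1$ everywhere, only non-trivial entries occur, which already yields the first assertion of (i) --- the recurrence expansion consists entirely of non-trivial SEPs. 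Second, on the block $\{k,\dots,n\}$ this $\ell$ is the $(n-k+1)$-cycle $(k,\,k+1,\,\dots,\,n)$, so $sgn(\ell)=(-1)^{n-k}\,sgn(\ell')$. Third, the product $c_{1,\ell'_1}\cdots c_{n,\ell_n}$ has exactly $n-k$ non-standard factors more than the $\ell'$-SEP, namely $c_{k,k+1},\dots,c_{n-1,n}$, since $c_{n,k}$ (with $k\le n-1$) and $c_{n,n}$ are standard. By the inductive hypothesis $sgn(\ell')=(-1)^{\textbf{c}'}$ where $\textbf{c}'$ counts the non-standard factors of the $\ell'$-SEP, hence $sgn(\ell)=(-1)^{(n-k)+\textbf{c}'}=(-1)^{\textbf{c}}$, which is precisely (\ref{non-trivial seps}). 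This completes part (i).

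For part (ii) I would note that the expanded recurrence produces pairwise distinct permutations --- the value $\ell_n$ pins down the summand index $k$ (with $\ell_n=n$ for the leading term), and distinct $\ell'$ within a fixed $k$ give distinct $\ell$ --- and that every non-trivial SEP of $\Hes_n$ arises this way: if $\ell_n=k<n$, tracing which rows may carry the values $n,n-1,\dots,k+1$ under the constraint $\ell_i\le i+1$ forces $\ell_{n-1}=n,\ \ell_{n-2}=n-1,\ \dots,\ \ell_k=k+1$, leaving a non-trivial SEP of $\Hes_{k-1}$ on the first $k-1$ rows, while $\ell_n=n$ leaves a non-trivial SEP of $\Hes_{n-1}$. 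Hence $|\SEPs_n|=|\SEPs_{n-1}|+\sum_{k=1}^{n-1}|\SEPs_{k-1}|$ with $|\SEPs_0|=|\SEPs_1|=1$, and either a direct induction or the telescoped form $|\SEPs_n|=2|\SEPs_{n-1}|$ (valid for $n\ge 3$) gives $|\SEPs_n|=2^{n-1}$. As an alternative, the same cycle analysis shows that a non-trivial SEP is exactly a partition of $\{1,\dots,n\}$ into blocks of consecutive integers on each of which $\ell$ acts as the forward shift, and such partitions are the $2^{n-1}$ compositions of $n$.

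The genuinely delicate points are the signature computation and keeping the non-standard-factor count synchronized with it through the inductive step; the bijection in part (ii) also requires the ``forcing'' claim to be argued carefully, but this is routine once the inequality $\ell_i\le i+1$ is applied systematically column by column. The remaining ingredients --- the base case, the $c$-rewriting of (\ref{determinant of LHM}), and the arithmetic of the recurrence for $|\SEPs_n|$ --- are mechanical.
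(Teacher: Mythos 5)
Your proposal is correct and takes essentially the same route as the paper: rewrite the recurrence (\ref{determinant of LHM}) in the $c$-notation of (\ref{Modified Hessenberg Matrix}) so that the factor $(-1)^{n-k}$ is absorbed into the $n-k$ superdiagonal entries, then induct on $n$ both for the non-triviality/sign claim and for the count via $\gamma(n)=1+1+\gamma(2)+\cdots+\gamma(n-1)=2^{n-1}$. Your explicit verification that each term corresponds to the permutation obtained by adjoining the cycle $(k,k+1,\dots,n)$, with $sgn(\ell)=(-1)^{n-k}sgn(\ell')$ matching the added non-standard factors, and your distinctness/exhaustiveness argument in part (ii), merely make explicit steps the paper treats as immediate.
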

\begin{proof}
\textit{i}) 
Writing (\ref{determinant of LHM}) in terms of entries of (\ref{Modified Hessenberg Matrix}) it takes the form:
\[\begin{array}{ll}
\det(\Hes_n)\!\!\!&=\displaystyle c_{n,n}\det(\Hes_{n-1})+\sum_{k=1}^{n-1}\left((-1)^{n-k}c_{n,k}\prod_{i=k}^{n-1}(-1)c_{i,i+1}\det(\Hes_{k-1})\right)\\
\!\!\!&=\displaystyle
c_{n,n}\det(\Hes_{n-1})\!+\sum_{k=1}^{n-1}\left(\!(-1)^{n-k}c_{n,k}(-1)^{n-k}\prod_{i=k}^{n-1}c_{i,i+1}\det(\Hes_{k-1})\!\right),
\end{array}\]
or equivalently\vspace{-0.05in}
\[\begin{array}{ll}
\det(\Hes_n)\!\!\!&=\displaystyle c_{n,n}\det(\Hes_{n-1})+\sum_{k=1}^{n-1}\prod_{i=k}^{n-1}c_{n,k}c_{i,i+1}\det(\Hes_{k-1}).
\end{array}\]
Taking into account that $\det(\Hes_0)=1$ and $\det(\Hes_1)=c_{1,1}$, the latter expression of $\det(\Hes_n)$ can be written as:\vspace{-0.07in}
\begin{equation}\label{expansion of C}
\begin{array}{ll}
\det(\Hes_n)\!=&\!\!\!\!\!c_{n,1}c_{1,2}c_{2,3}...c_{n-1, n}+c_{n,2}c_{2,3}...c_{n-1, n}c_{1,1}+ c_{n,3}c_{3,4}...c_{n-1, n}\det(\Hes_{2})\\
\!&\!\!\!\!\!\!+...+c_{n, n-1}c_{n-1, n}\det(\Hes_{n-2})+c_{n,n}\det(\Hes_{n-1})\end{array} \end{equation}
We apply induction on $n\ge 2$. Since $\det(\Hes_{2})=c_{1,1}c_{2,2}-c_{2,1}(-c_{1,2})=c_{1,1}c_{2,2}+c_{2,1}c_{1,2}$, the statement holds for $n=2$. Suppose that all the SEPs of $\Hes_{k}$ for $k\le n-1$ are non-trivial in the form: $c_{1,\per_1}c_{2,\per_2}...c_{i,\per_i}...c_{k,\per_k}$. Applying this hypothesis to the right hand side of (\ref{expansion of C}), we infer that every SEP is non-trivial in the form (\ref{non-trivial seps}), as being product of non-trivial factors. This completes the induction.\\
\textit{ii}) Let $\gamma(n)$ be the number of non-trivial SEPs of $\Hes_n$. We apply the induction on $n\ge 2$. As $\gamma(2)=2^{2-1}=2$ the statement holds for $n=2$. Suppose that the statement holds for $k\le n-1$. Then (\ref{expansion of C}) implies that: 
$\gamma(n)=1+1+\gamma(2)+\gamma(3)+...+\gamma(n-2)+\gamma(n-1)=1+1+2+2^2+...+2^{n-2}=2^{n-1}$.
This completes the induction.
\end{proof}
In view of Proposition \ref{positively signed}, the determinant in (\ref{determinant of LHM})
consists of $\card(\SEPs_n)=2^{n-1}$ non-trivial SEPs exclusively, while the formula (\ref{Leibniz formula}) yields the surplus $n!-2^{n-1}$ trivial SEPs. 
\subsection{Anatomy of non-trivial SEPs}
\label{sec:AnatomyOfSignedElementaryProductsAssociatedWithA} 
In this subsection we examine the structure of the non-trivial SEPs, as sequences of standard and non-standard factors.

\begin{proposition}\label{fundamental}
Every non-trivial factor associated with $\Hes_n$ is a factor of a non-trivial \emph{SEP} of $\Hes_n$.
\end{proposition}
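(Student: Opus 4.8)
The plan is to prove the statement by an explicit construction. A non-trivial factor of $\Hes_n$ is an entry $c_{i_0,j_0}$ with $1\le i_0\le n$ and $1\le j_0\le\min\{i_0+1,n\}$; I will produce a permutation $\per\in S_n$ with $\per_{i_0}=j_0$ whose associated elementary product is non-trivial, i.e. satisfies $\per_k\le k+1$ for every $k$ (equivalently, every factor $h_{k,\per_k}$ lies on or below the superdiagonal). Once such a $\per$ is in hand, part (\textit{i}) of Proposition~\ref{positively signed} identifies the corresponding non-trivial SEP with $c_{1,\per_1}c_{2,\per_2}\cdots c_{n,\per_n}$, so that $c_{i_0,j_0}$ appears among its factors, which is exactly the assertion.

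For the construction I would distinguish the two cases dictated by the position of $(i_0,j_0)$ relative to the main diagonal. If $j_0\le i_0$, let $\per$ be the cyclic permutation $(j_0,\,j_0+1,\,\dots,\,i_0)$: explicitly $\per_k=k+1$ for $j_0\le k\le i_0-1$, $\per_{i_0}=j_0$, and $\per_k=k$ otherwise (this degenerates harmlessly to the identity when $j_0=i_0$). If instead $j_0=i_0+1$, which forces $i_0\le n-1$ so that $i_0+1$ is a legitimate index, let $\per$ be the transposition $(i_0,\,i_0+1)$. In both cases $\per_{i_0}=j_0$ by construction, and the inequality $\per_k\le k+1$ is immediate: off the support of the cycle $\per_k=k$, and on the support one has $\per_k\in\{k,k+1\}$ except at $k=i_0$, where $\per_{i_0}=j_0\le i_0<i_0+1$. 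Hence the product $\prod_k h_{k,\per_k}$ uses only non-trivial entries, so the SEP it carries lies in $\SEPs_n$ and has $c_{i_0,j_0}$ as a factor.

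An alternative, equivalent route is induction on $n$ built on the SEP-expansion~(\ref{expansion of C}): an interior entry $c_{i,j}$ with $i,j\le n-1$ reappears in the summand $c_{n,n}\det(\Hes_{n-1})$ once an $\Hes_{n-1}$-witness is multiplied by $c_{n,n}$; the superdiagonal entry $c_{n-1,n}$ sits inside the first summand $c_{n,1}c_{1,2}\cdots c_{n-1,n}$; and a bottom-row entry $c_{n,k}$ sits inside the $k$-th summand. I would relegate this to a remark and keep the direct argument, which is self-contained. No genuine obstacle appears; the only care needed is bookkeeping at the boundary — confirming that $j_0=i_0+1$ cannot occur when $i_0=n$, that the cycle $(j_0,\dots,i_0)$ is well defined when $j_0=i_0$, and that ``non-trivial SEP'' means precisely $\per_k\le k+1$ for all $k$, which is what licenses the appeal to Proposition~\ref{positively signed}.
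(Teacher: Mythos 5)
Your construction is correct, and it takes a genuinely different route from the paper. The paper proves the proposition by induction on $n$ through the SEP expansion (\ref{expansion of C}) of the recurrence (\ref{determinant of LHM}) — precisely the argument you relegate to a remark: bottom-row entries and superdiagonal entries are visible directly in the summands, and interior entries are inherited from $\Hes_{n-1}$ via the term $c_{n,n}\det(\Hes_{n-1})$. Your main argument instead exhibits, for each non-trivial entry $c_{i_0,j_0}$, an explicit permutation (the cycle $(j_0,j_0+1,\dots,i_0)$ when $j_0\le i_0$, the transposition $(i_0,i_0+1)$ when $j_0=i_0+1$) with $\per_{i_0}=j_0$ and $\per_k\le k+1$ for all $k$; since a non-trivial SEP is by definition one built exclusively from entries on or below the superdiagonal, this immediately yields a non-trivial SEP containing the given factor. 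Two small remarks. First, the appeal to Proposition~\ref{positively signed}(\textit{i}) is not needed: the proposition only asks for membership of the factor in some non-trivial SEP, and your permutation certifies that directly from the definition, independently of any sign bookkeeping (indeed, as stated, Proposition~\ref{positively signed}(\textit{i}) concerns SEPs appearing in the expansion of the recurrence, so invoking it for an abstractly constructed SEP adds a step you do not need). Second, your explicit-witness technique is essentially the one the paper itself uses later, in the proof of Proposition~\ref{immediate successors 1}, to extend an initial string to a full non-trivial SEP; so your proof anticipates that device, at the price of not reusing the expansion (\ref{expansion of C}) that the paper's inductive proof keeps central because it also drives the counting argument in Proposition~\ref{positively signed}(\textit{ii}). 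What your version buys is self-containedness and constructiveness: it needs neither the recurrence nor induction, only the definition of non-trivial entries.
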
 
\begin{proof} 
The proof is by induction on $n\ge 2$. As $\det(\Hes_{2})=c_{1,1}c_{2,2}+c_{2,1}c_{1,2}$, the statement holds for $n=2$. Let $\Hes_{n-1}$ fulfil the statement. 
An inspection of (\ref{expansion of C}) shows that all entries of the $n$th row of $\Hes_n$ as well as the opposite-sign entries of the superdiagonal (non-standard factors) of $\Hes_n$ are factors of non-trivial SEPs of $\Hes_n$. The remaining (standard) factors of SEPs of $\Hes_n$ are entries of $\Hes_{n-1}$. The induction entails that they are factors of SEPs of $\Hes_{n-1}$. As shown in (\ref{expansion of C}), all the factors of SEPs of $\Hes_{n-1}$ are also factors of SEPs of $\Hes_{n}$ yielded by the product $c_{n,n}\det(\Hes_{n-1})$.
This completes the induction. 
\end{proof}
Propositions \ref{positively signed} and \ref{fundamental} justify the terminology ``standard and non-standard factors" adopted herein. Notice that Proposition \ref{fundamental} is not true for determinants of either lower or upper triangular matrices, in which we identify as trivial factors the zero entries in the, respectively, upper-left or lower-right corner of the matrix. Such determinants are built out of one non-trivial SEP consisting of the entries of the main diagonal exclusively.
In all that follows we adhere to the conventions:
$c_{0,0}=h_{0,0}=1$ and $\per_0=0$.
\begin{proposition}\label{elementary} Let $C=c_{1,\per_1}c_{2,\per_2}...c_{n,\per_n}$ be a non-trivial \emph{SEP} of $\Hes_n$.
If $i,k\in\integers^+$ such that $0\le k< i\le n$, then:\vspace{-0.05in}
\begin{equation}\label{Eq. elementary}
i+1>\per_{k}.
\end{equation}
\end{proposition}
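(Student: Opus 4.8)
The plan is to derive the inequality directly from the meaning of ``non-trivial SEP,'' with no induction needed. Recall that the non-trivial entries of $\Hes_n$ are precisely those $h_{i,j}$ with $j-i\le 1$; everything strictly above the superdiagonal is the zero ``trivial'' part. Hence, looking at the $k$th factor $c_{k,\per_k}$ of $C$ (which up to sign is the entry $h_{k,\per_k}$), the assumption that $C$ is non-trivial forces $\per_k\le k+1$ for every $k$ with $1\le k\le n$. For $k=0$ the standing convention $c_{0,0}=1$, $\per_0=0$ applies, so the bound $\per_0=0\le 1$ holds trivially there as well.

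I would then just combine this column bound with the hypothesis $k<i$. Since $k$ and $i$ are integers with $k<i$, we have $k+1\le i$, and therefore
\[\per_k\le k+1\le i<i+1,\]
which is exactly the asserted inequality (\ref{Eq. elementary}); the case $k=0$ is subsumed since $\per_0=0<1\le i<i+1$.

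There is essentially no obstacle in this argument; the only points deserving a word of care are: (a) spelling out that ``$C$ has a non-trivial factor in row $k$'' translates into the column bound $\per_k\le k+1$ — this is immediate from the definition of trivial entries, and the bound is automatically satisfied when $k=n$ since then $\per_n\le n\le n+1$; and (b) invoking the convention $\per_0=0$, $c_{0,0}=1$ so that the statement is meaningful at $k=0$. One could instead package this as an induction on $n\ge 2$ paralleling Propositions \ref{positively signed} and \ref{fundamental}, reading the factors off the expansion (\ref{expansion of C}), but that would be an unnecessary detour.
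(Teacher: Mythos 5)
Your proposal is correct and matches the paper's own proof: both derive $\per_k\le k+1$ directly from the non-triviality of the factor $c_{k,\per_k}$ and then chain $\per_k\le k+1\le i<i+1$ using $k<i$. Your extra remarks about the $k=0$ convention and the $k=n$ case are harmless elaborations of the same argument, not a different route.
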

\begin{proof} 
As $C$ is non-trivial, all the factors of $C$ are non-trivial. The definition of a non-trivial factor, say $c_{k,\per_{k}}$, entails that $\per_{k}-k\le 1$, whence $\per_{k}\le k+1$. The hypothesis implies that $k<i$, whence $k+1\le i$. The assertion follows from:
$\per_{k}\le k+1\le i<i+1$.
\end{proof}
Let $C=c_{1,\per_1}...c_{k,\per_k}...c_{i-1,\per_{i-1}}c_{i,\per_i}...c_{n,\per_n}$ be a non-trivial SEP of $\Hes_n$. A product 
\[C[k,i]\stackrel{{\rm def}}{=}c_{k,\per_k}c_{k+1,\per_{k+1}}...c_{i-1,\per_{i-1}}c_{i,\per_i}\] 
of consecutive factors of $C$ is said to be a \textit{string}. 
The string $C[k,i]$ is said to be a \textit{substring} of  $C[q,p]$ if both $C[k,i]$ and $C[q,p]$ are strings of the same non-trivial SEP and satisfy $q\le k$ and $p\le i$. In this case, the string $C[q,p]$ is said to be a \textit{superstring} of $C[k,i]$. If $i=k$, then $C[k,k]=c_{k,\per_k}$. 
The string $C[1,i]$ will be called \emph{initial string} determined by $i$ and it will be simply denoted by $C[i]$. The class of all initial strings determined by $i$ is denoted by $\ISCs[i]$. Evidently  $C[0]=c_{0,0}=1$, $\ISCs[0]=\{c_{0,0}\}$ and $\ISCs[1]=\{c_{1,1}, c_{1,2}\}$. Notice that we can also write $\ISCs[1]=\{c_{0,0}c_{1,1}, c_{0,0}c_{1,2}\}$. Moreover, $\ISCs[2]=\{c_{1,1}c_{2,2}, c_{1,1}c_{2,3}, c_{1,2}c_{2,1}, c_{1,2}c_{2,3}\}$. Even though the initial string $c_{1,1}c_{2,3}$ (which is included in the SEP $c_{1,1}c_{2,3}c_{3,2}$) is not a SEP, every non-trivial SEP is an initial string, since $C\in\SEPs_n$ is included in itself. In view of (\ref{chain of Hess}), a SEP of $\Hes_n$ is not a SEP of $\Hes_m$, whenever $m\not=n$. Unlike SEPs,  the string  $C[k,n]$ is also a string of $\Hes_m$ for all $m>n$.

A non-trivial factor $c_{i,j}$ is called \emph{immediate successor} (IS)  of $C[k,i-1]=c_{k,\per_k}...c_{i-1,\per_{i-1}}$, $i\ge 1$, if  $c_{k,\per_k}...c_{i-1,\per_{i-1}}c_{i,j}$ is a string. By virtue of Proposition \ref{fundamental}, every non-trivial factor $c_{i,j}$, $1\le i \le n$, associated with $\Hes_n$ is an IS of some initial string in $\ISCs[i-1]$. 
A necessary and sufficient condition for a non-trivial factor to be an IS of an initial string is given below:
\begin{proposition} \label{immediate successors 1} 
Let $i\ge 1$ and $C[i-1]=c_{1,\per_1}c_{2,\per_2}...c_{i-1,\per_{i-1}}$ be an initial string in $\ISCs[i-1]$. 
A non-trivial factor $c_{i,j}$ of $\Hes_n$ is an \emph{IS} of $C[i-1]$ if and only if $j\not=\per_1, j\not=\per_2,...,j\not=\per_{i-1}$.
\end{proposition}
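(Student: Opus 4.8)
The plan is to prove the two directions of the equivalence separately and very asymmetrically: necessity is essentially a one-line consequence of the fact that every SEP of $\Hes_n$ is indexed by a permutation, whereas sufficiency is the substantive part and amounts to exhibiting a concrete non-trivial SEP that extends the given data.

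\emph{Necessity.} If $c_{i,j}$ is an IS of $C[i-1]$, then $C[i-1]\,c_{i,j}=c_{1,\per_1}\cdots c_{i-1,\per_{i-1}}c_{i,j}$ is a string, hence the length-$i$ initial segment of some non-trivial SEP $c_{1,\per_1}\cdots c_{n,\per_n}$ of $\Hes_n$, with $\per_i=j$. By Proposition \ref{positively signed} this SEP is indexed by a permutation $\per\in S_n$, so injectivity of $\per$ forces $j=\per_i\ne\per_k$ for $1\le k\le i-1$.

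\emph{Sufficiency.} Let $c_{i,j}$ be a non-trivial factor of $\Hes_n$ with $j\notin\{\per_1,\dots,\per_{i-1}\}$. Since $C[i-1]\in\ISCs[i-1]$, the indices $\per_1,\dots,\per_{i-1}$ are distinct, and non-triviality of the factors $c_{k,\per_k}$ gives $\per_k\le k+1\le i$ for $1\le k\le i-1$; likewise non-triviality of $c_{i,j}$ gives $1\le j\le i+1$. Hence $S:=\{\per_1,\dots,\per_{i-1},j\}$ is a set of $i$ distinct integers lying in $\{1,\dots,i+1\}$. If $i=n$ this already forces $S=\{1,\dots,n\}$, so $\sigma:=(\per_1,\dots,\per_{n-1},j)$ is a permutation of $\{1,\dots,n\}$ with $\sigma_k\le k+1$ for all $k$. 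If $i<n$, let $m$ be the unique element of $\{1,\dots,i+1\}\setminus S$ and define $\sigma\in S_n$ by $\sigma_k=\per_k$ for $k\le i-1$, $\sigma_i=j$, $\sigma_{i+1}=m$, and $\sigma_k=k$ for $i+2\le k\le n$; this is a bijection since $S\cup\{m\}=\{1,\dots,i+1\}$ while positions $i+2,\dots,n$ receive the values $i+2,\dots,n$, and it satisfies $\sigma_k\le k+1$ for every $k$ (trivially for $k\le i-1$ and $k\ge i+2$, by $j\le i+1$ for $k=i$, and by $m\le i+1<i+2$ for $k=i+1$). In either case Proposition \ref{positively signed} shows that $c_{1,\sigma_1}\cdots c_{n,\sigma_n}$ is a non-trivial SEP of $\Hes_n$ whose length-$i$ initial segment is exactly $C[i-1]\,c_{i,j}$, so $C[i-1]\,c_{i,j}$ is a string and $c_{i,j}$ is an IS of $C[i-1]$.

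I expect the only non-routine point to be the counting observation that the $i$ column indices already present in $C[i-1]\,c_{i,j}$ exhaust all but one element of $\{1,\dots,i+1\}$: this is precisely what makes the leftover value $m$ small enough ($m\le i+1$) to be legally placed in row $i+1$, after which the ``identity tail'' $\sigma_k=k$ automatically respects the superdiagonal constraint of a Hessenberg matrix. Everything else is bookkeeping with Proposition \ref{positively signed} and the definition of a string, so the write-up should be short.
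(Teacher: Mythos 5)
Your proof is correct, and it follows the same overall skeleton as the paper: necessity from injectivity of the indexing permutation, sufficiency by explicitly completing $C[i-1]c_{i,j}$ to a permutation all of whose factors are non-trivial. The difference lies in the completion. The paper fills rows $i+1,\dots,n-1$ with the superdiagonal (non-standard) factors $\per_{i+r}=i+r+1$ and places the single leftover value in the last row, which requires an appeal to the inequality of Proposition \ref{elementary} to check that no column index repeats; you instead observe that $\{\per_1,\dots,\per_{i-1},j\}$ misses exactly one element $m$ of $\{1,\dots,i+1\}$, insert $m$ at row $i+1$, and finish with the identity tail $\sigma_k=k$. Your variant makes injectivity immediate (the tail uses only the fresh values $i+2,\dots,n$) and handles the boundary case $i=n$ explicitly, at the price of the small counting observation about $m$; the paper's variant needs no counting but leans on Proposition \ref{elementary} and treats the $i=n$ degeneration implicitly. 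One cosmetic remark: invoking Proposition \ref{positively signed} is not really needed in either direction — that a SEP is indexed by a permutation, and that a permutation with $\sigma_k\le k+1$ for all $k$ yields a non-trivial SEP, are both matters of definition; Proposition \ref{positively signed} only governs the sign bookkeeping in the $c$-notation.
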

\begin{proof}
If $i=1$, the factors $c_{1,1}$ and $c_{1,2}$ are the ISs of $C[0]$, since both $j=1$ and $j=2$ differ from $\per_0=0$.
If $i>1$ and $c_{i,j}$ is an IS of $C[i-1]$, then, by definition, there is $n\in\integers^+$ and a non-trivial SEP, say $B$, of the form
$B=c_{1,\per_1}c_{2,\per_2}...c_{i-1,\per_{i-1}}c_{i,\per_i}...c_{n,\per_n}$ with  $j=\per_i$. As $\per$ is bijective the result follows.

For the converse statement we assume that $j\not=\per_1, j\not=\per_2,...,j\not=\per_{i-1}$. First, we construct a non-trivial SEP of $\Hes_n$, which includes $C[i]=c_{1,\per_1}c_{2,\per_2}...c_{i-1,\per_{i-1}}c_{i,j}$.  We define: $\per_i=j,\ \per_{i+1}=i+2,...,\per_{n-1}=n$. 
If $1\le r\le n-i-1$, then $i+1\le i+r\le n-1$.
In view of (\ref{Eq. elementary}), we have: $\per_{i+r}=(i+r)+1>\per_k$ for all $k: 0\le k< i+r$. Thus,  $\per_{p}\not=\per_{q}$ if and only if $p\not=q$, provided that $1\le p\le n-1$ and $1\le q\le n-1$. Since $\{1,2,...,n\}\setminus \{\per_1,\per_2,...,\per_{n-1}\}$
is a singleton, say $\{m\}$, we define $\per_n=m$. 
Accordingly, a bijection $\per$: $1\mapsto\per_1,2\mapsto\per_2,...,i\mapsto\per_i,...,n\mapsto\per_n$ has been constructed, which determines a non-trivial SEP including $C[i]$. As $C[i]$ includes $C[i-1]$, it follows that $c_{i,j}$ is an IS of $C[i-1]$, as claimed.
\end{proof}
The above Proposition is in accord with the construction of SEPs for complete square matrices, since all the entries of these matrices are non-trivial. 
\begin{proposition}\label{immediate successors 2}\textit{i}\emph{)} Let $1\le i\le n-1$. There are two distinct \emph{ISs} of  $C[i-1]$ in $\ISCs[i-1]$. One of these \emph{ISs} is the non-standard factor $c_{i, i+1}$.\\
\textit{ii}\emph{)} Let $i=n$. There is only one \emph{IS} of $C[n-1]$, which is standard.
\end{proposition}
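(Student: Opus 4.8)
The plan is to reduce the statement to the characterization of immediate successors in Proposition~\ref{immediate successors 1} together with the column bound in Proposition~\ref{elementary}, and then to run a short counting argument. By definition the given initial string is $C[i-1]=c_{1,\per_1}c_{2,\per_2}\dots c_{i-1,\per_{i-1}}=C[1,i-1]$ for some non-trivial SEP $C=c_{1,\per_1}c_{2,\per_2}\dots c_{n,\per_n}$ of $\Hes_n$; I would fix such a $C$ and work with it. By Proposition~\ref{immediate successors 1}, the ISs of $C[i-1]$ are exactly the non-trivial factors $c_{i,j}$ of $\Hes_n$ whose column index $j$ avoids the set $\{\per_1,\dots,\per_{i-1}\}$, so the task becomes to determine which columns $j$ both index a non-trivial entry of the $i$th row and satisfy $j\notin\{\per_1,\dots,\per_{i-1}\}$.

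The first substantive step is to locate $\{\per_1,\dots,\per_{i-1}\}$. Since $\per$ is a bijection these $i-1$ values are pairwise distinct, and applying Proposition~\ref{elementary} to $C$ with the present $i$ yields $\per_k<i+1$, i.e.\ $\per_k\le i$, for every $k$ with $1\le k\le i-1$. Hence $\{\per_1,\dots,\per_{i-1}\}$ is a set of $i-1$ distinct elements of $\{1,2,\dots,i\}$, so by counting its complement in $\{1,2,\dots,i\}$ is a singleton $\{m\}$ with $1\le m\le i$; this is the unique column $\le i$ that survives.

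For part \textit{i}), when $1\le i\le n-1$ the non-trivial entries of the $i$th row of $\Hes_n$ occupy precisely columns $1,2,\dots,i,i+1$, with $c_{i,i+1}$ the superdiagonal (non-standard) entry. The surviving columns among these are exactly $m$ (from the previous step) and $i+1$ (since every $\per_k\le i<i+1$), and they are distinct because $m\le i<i+1$. Thus $C[i-1]$ has exactly two ISs, namely the standard factor $c_{i,m}$ and the non-standard factor $c_{i,i+1}$, which is the assertion. For part \textit{ii}), when $i=n$ the $n$th row has no superdiagonal entry, so its non-trivial entries occupy only columns $1,2,\dots,n$; the same counting (now with $\per_1,\dots,\per_{n-1}$ a set of $n-1$ distinct elements of $\{1,\dots,n\}$) shows the unique surviving column is $m\le n$, so $c_{n,m}$ is the only IS of $C[n-1]$ and it is standard.

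I do not expect a real obstacle here: once Proposition~\ref{elementary} is invoked the argument is essentially pigeonhole. The one point requiring care is the case split between the rows $i\le n-1$, which carry the extra non-trivial column $i+1$ on the superdiagonal, and the final row $i=n$, which does not; this distinction is exactly what yields two ISs in the first case and a single (necessarily standard) IS in the second, and it should be stated explicitly rather than glossed over.
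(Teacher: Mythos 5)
Your proposal is correct and follows essentially the same route as the paper's proof: both reduce the claim to the characterization in Proposition~\ref{immediate successors 1}, invoke the bound of Proposition~\ref{elementary} to see that $\per_1,\dots,\per_{i-1}$ all lie in $\{1,\dots,i\}$, and then count the surviving non-trivial columns of row $i$ (namely $i+1$ of them when $i\le n-1$, versus $n$ in the last row). Your version is slightly more explicit than the paper's in pinpointing the unique surviving column $m\le i$ by pigeonhole, but the underlying argument is the same.
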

\begin{proof}
\textit{i})  Let $c_{i,j}$ be an IS of $C[i-1]=c_{1,\per_1}c_{2,\per_2}...c_{i-1,\per_{i-1}}$. In view of Proposition \ref{immediate successors 1} the number of all possible ISs of $C[i-1]$ coincides with the number of the non-trivial entries of the $i$th row of $\Hes_n$ satisfying $j\not=l_1, j\not=l_2,...,j\not=l_{i-1}$. 
The $i$th row contains $i+1$ non-trivial entries, and therefore there are 2 (as determined by $i+1-i-1=2$) ISs of $C[i-1]$, as asserted.  
It follows from (\ref{Eq. elementary}) that $i+1\not=\per_{i-1},...,i+1\not=\per_1$.
Proposition \ref{immediate successors 1} (applied with $j=i+1$) implies that the non-standard factor $c_{i,i+1}$ is an IS of $C[i-1]$, as claimed.

\textit{ii})  If $i=n$, then the ISs of $C[n-1]$ are entries of the $n$th row, which contains $n$ non-trivial entries. Thus there is 1 (as determined by $n-(n-1)=1$) available IS of $c_{n-1,\per_{n-1}}$, which is standard as being entry of the last row.
\end{proof}
All factors of a non-trivial SEP, say $C=c_{1,\per_1}c_{2,\per_2}...c_{n-1,\per_{n-1}}c_{n,\per_n}$, in the tree representation of $\det(\Hes_n)$ are nodes, from which two branches start, up to factor $c_{n-1,\per_{n-1}}$, from which only one branch starts. These results are partly displayed below: \vspace{0.04in}

\hspace{1in}\begin{tikzpicture}[sibling distance=0pt]
\tikzset{level distance=35pt}
\tikzset{level 3/.style={sibling distance=0pt}}
\tikzset{level 3/.style={level distance=40pt}}
\tikzset{level 4/.style={level distance=25pt}}
\tikzset{every tree node/.style={align=center,anchor=north}}
\Tree [ [.$c_{11}$ [.$c_{22}$\\{\vdots} [.$c_{n-1,n-1}$ $c_{n,n}$ ] [.$c_{n-1,n}$ $c_{n,n-1}$ ] ] [.$c_{23}$\\{\vdots} [.$c_{n-1,2}$ $c_{n,n}$ ] [.$c_{n-1,n}$ $c_{n,2}$ ] ] ]
[.$c_{12}$ [.$c_{21}$\\{\vdots} [.$c_{n-1,n-1}$ $c_{n,n}$ ] [.$c_{n-1,n}$ $c_{n,n-1}$ ] ] [.$c_{23}$\\{\vdots} [.$c_{n-1,1}$ $c_{n,n}$ ] [.$c_{n-1,n}$ $c_{n,1}$ ] ] ] ] ] ] 
\end{tikzpicture}\\
Furthermore, at each node is rooted one branch ending at a non-standard factor and another ending at a standard factor. All branches rooted at node  $c_{n-1,\per_{n-1}}$ end at a standard factor. The results are portrayed in the following figures: \vspace{0.04in}

\hspace{1.in}\begin{tikzpicture}[sibling distance=20pt]
\tikzset{level distance=45pt}
\tikzset{every tree node/.style={align=center,anchor=north}}
\Tree [.$c_{i-1,\per_{i-1}}$\\(non-trivial) {$c_{i,\ell_i}$\\(standard) } {$c_{i,i+1}$\\(non-standard)} ]
\end{tikzpicture}
\hspace{1in}
\begin{tikzpicture}[sibling distance=15pt]
\tikzset{level distance=45pt}
\tikzset{every tree node/.style={align=center,anchor=north}}
\Tree [.{$c_{n-1,\per_{n-1}}$\\(non-trivial)} {$c_{n,\ell_n}$\\(standard)}  ]
\end{tikzpicture}\\
The fact that the number of non-trivial SEPs of $\Hes_n$ is $2^{n-1}$ is thus re-established. Evidently $\card(\ISCs[n])=2^{n}$.
The standard ISs of strings are classified below. 
\begin{proposition}\label{standard IS} \textit{i}\emph{)} Let $1\le i \le n$. The standard \emph{IS} of any standard factor $c_{i-1, \per_{i-1}}$ is $c_{i,i}$.\\
\textit{ii}\emph{)} If $2\le i \le n$ and the factors of $C[i-1]$ are all non-standard, then the standard \emph{IS} of $C[i-1]$ is  $c_{i,1}$. \\
\textit{iii}\emph{)} Let $C[i-k-1,i-1]$ be a string such that $3\le i\le n$ and $1\le k\le i-2$. If the first factor of $C[i-k-1,i-1]$ is standard and the substring $C[i-k,i-1]$ of $C[i-k-1,i-1]$ consists exclusively of non-standard factors, that is\vspace{-0.1in}
\[\begin{array}{ccc}  \hspace{0.4in} C[i-k,i-1]= &\underbrace{c_{i-k, i-k+1}c_{i-k+1, i-k+2}...c_{i-1, i}},& \\
\hspace{0.4in} &         k  &\vspace{-0.1in}
\end{array}\]  
then the standard \emph{IS} of $C[i-k-1,i-1]$ is $c_{i,i-k}$.
\end{proposition}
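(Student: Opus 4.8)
The plan is to reduce all three parts to a single counting observation and then carry out three short computations. The observation is this: once the first $i-1$ factors $c_{1,\per_1},\dots,c_{i-1,\per_{i-1}}$ of a non-trivial \emph{SEP} have been fixed, exactly one standard column index remains available in row $i$. To see this I would use that $\per$ is a bijection, so $\per_1,\dots,\per_{i-1}$ are pairwise distinct, together with Proposition~\ref{elementary}, which forces $\per_m\le m+1\le i$ for every $m\le i-1$; hence $\{\per_1,\dots,\per_{i-1}\}$ is a subset of $\{1,\dots,i\}$ of size $i-1$, leaving a unique missing value $j_0\in\{1,\dots,i\}$. By Proposition~\ref{immediate successors 1} the \emph{ISs} of the initial string $C[i-1]=c_{1,\per_1}\cdots c_{i-1,\per_{i-1}}$ are exactly the non-trivial factors $c_{i,j}$ with $j\notin\{\per_1,\dots,\per_{i-1}\}$, so the only \emph{standard} one (i.e.\ with $j\le i$) is $c_{i,j_0}$, consistently with Proposition~\ref{immediate successors 2}. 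Since every string $C[p,i-1]$ of a non-trivial \emph{SEP} lies inside the initial string $C[i-1]$ of the same \emph{SEP} and shares its \emph{ISs}, and since the explicit extension constructed in the converse part of Proposition~\ref{immediate successors 1} shows $c_{i,j_0}$ is actually attained, the whole problem reduces to identifying the set $\{\per_1,\dots,\per_{i-1}\}$ — equivalently the missing index $j_0$ — from the hypotheses of each part.

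For part \textit{i}) I would combine $\per_{i-1}\le i-1$ (the factor $c_{i-1,\per_{i-1}}$ being standard) with $\per_m\le m+1\le i-1$ (Proposition~\ref{elementary}) for $m\le i-2$ to get that $\{\per_1,\dots,\per_{i-1}\}$ is a size-$(i-1)$ subset of $\{1,\dots,i-1\}$, hence equals $\{1,\dots,i-1\}$; thus $j_0=i$ and the standard \emph{IS} is $c_{i,i}$ (the case $i=1$ being, under the conventions $c_{0,0}=1$, $\per_0=0$, the statement that the standard \emph{IS} of $c_{0,0}$ is $c_{1,1}$). For part \textit{ii}), ``all factors of $C[i-1]$ non-standard'' reads $\per_m=m+1$ for $m\le i-1$, so $\{\per_1,\dots,\per_{i-1}\}=\{2,\dots,i\}$ and $j_0=1$, giving standard \emph{IS} $c_{i,1}$. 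For part \textit{iii}), the displayed form of the substring $C[i-k,i-1]$ gives $\per_m=m+1$ for $i-k\le m\le i-1$, i.e.\ these $k$ factors occupy exactly the columns $\{i-k+1,\dots,i\}$; then, exactly as in part \textit{i}), the standard factor $c_{i-k-1,\per_{i-k-1}}$ together with Proposition~\ref{elementary} forces $\{\per_1,\dots,\per_{i-k-1}\}=\{1,\dots,i-k-1\}$. Adding these, $\{\per_1,\dots,\per_{i-1}\}=\{1,\dots,i-k-1\}\cup\{i-k+1,\dots,i\}=\{1,\dots,i\}\setminus\{i-k\}$, so $j_0=i-k$ and the standard \emph{IS} of $C[i-k-1,i-1]$ is $c_{i,i-k}$.

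The step I expect to be the main obstacle is making ``the standard \emph{IS}'' well defined in parts \textit{i}) and \textit{iii}): the given string does not pin down the individual early columns $\per_1,\dots,\per_{i-k-2}$, only their union, so I must verify that this union — and hence $j_0$ — is the same for \emph{every} non-trivial \emph{SEP} containing the prescribed string. That is precisely the forced-set computation sketched above; once it is in place, each of the three conclusions follows from a one-line cardinality count with Proposition~\ref{elementary}, and the existence of the claimed \emph{IS} is supplied by the construction already used in Proposition~\ref{immediate successors 1}.
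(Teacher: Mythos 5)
Your proof is correct and takes essentially the same route as the paper's: both arguments rest on Proposition \ref{immediate successors 1} together with the non-triviality bound $\per_m\le m+1$ (Proposition \ref{elementary}), and both dispose of the ambiguity in the unspecified early factors by arguing over every non-trivial SEP (equivalently, every initial string) containing the given string. The only organizational difference is that you pin down the entire set $\{\per_1,\dots,\per_{i-1}\}$ and identify the unique missing standard column by a cardinality count, whereas the paper checks directly that the claimed column index is unused and obtains uniqueness of the standard IS from Proposition \ref{immediate successors 2}; this is a repackaging of the same idea rather than a genuinely different argument.
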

\begin{proof} \textit{i}) Let us consider an arbitrary initial string $C[i-1]$, which contains the standard factor $c_{i-1, \per_{i-1}}$ (ending factor). In view of Proposition \ref{immediate successors 1}, we need to verify that none of the factors of $C[i-1]$ has as column position the index $i$. As the entries $c_{k,i}$ for $1\le k\le i-2$ are all trivial, it follows from (\ref{Eq. elementary}) that the index $i$, as being the column position of $c_{i,i}$, is not a column index of the predecessors of $c_{i-1,\per_{i-1}}$ in $C[i-1]$. As $c_{i-1,\per_{i-1}}$ is standard, we further infer that $i\not=\per_{i-1}$ and the result follows.\\
\textit{ii}) The hypothesis entails that the predecessors of $c_{i\per_i}$ are the non-standard factors: $c_{1,2},...,c_{i-1,i}$. As $\per_i=1$ is not a column index of these predecessors the assertion follows.\\
\textit{iii})  Let us consider any initial string,  $C[i-1]$, which includes $C[i-k-1,i-1]$, that is 
\[C[i-1]=c_{1\per_1}...c_{i-k-2,\per_{i-k-2}}c_{i-k-1,\per_{i-k-1}}c_{i-k, i-k+1}c_{i-k+1, i-k+2}...c_{i-1, i},\]
where $c_{1\per_1},...,c_{i-k-2,\per_{i-k-2}}$ are arbitrary.
We need to verify that none of the factors of $C[i-1]$ has as column position the index $i-k$. As $c_{i-k-1,\per_{i-k-1}}$ is standard, it follows that $\per_{i-k-1}\not=i-k$. Evidently, $i-k$ is not a column index of $c_{i-k, i-k+1},...,c_{i-2, i-1}, c_{i-1, i}$. Finally, on account of (\ref{Eq. elementary}), $i-k$ is not a column index of the predecessors of $c_{i-k-1,\per_{i-k-1}}$, since all these predecessors are trivial.
\end{proof}
The results of Proposition \ref{standard IS} can be rephrased as follows: The standard IS of a standard factor is the entry of the main diagonal in the successor row (statement 1). The standard factor whose predecessors are $k$ consecutive non-standard factors is $c_{i,i-k}$ (statement 3). As special case, if $k=i-1$, then all the predecessors of $c_{i,\per_i}$ are non-standard factors, whence $c_{i\per_i}=c_{i1}$ (statement 2).

The above results are illustrated in the following sub-tree of the tree representation of $\det(\Hes_n)$:\vspace{0.2in}

\hspace{1in}\begin{tikzpicture}
[sibling distance=0pt]
\tikzset{level distance=50pt}
\tikzset{level 3/.style={sibling distance=0pt}}
\tikzset{level 3/.style={level distance=65pt}}
\tikzset{level 4/.style={sibling distance=10pt}}
\tikzset{every tree node/.style={align=center,anchor=north}}
\Tree [.\node[draw]{$c_{i-k-1,j}$\\(standard)}; $c_{i-k,i-k}$\\(standard)
[.$c_{i-k,i-k+1}$\\(non-standard)
$c_{i-k+1,i-k}$\\(standard) [.$c_{i-k+1,i-k+2}$\\(non-standard)\vspace{-0.07in}\\{\vdots} $c_{i-1,i-k}$\\(standard) 
[.$c_{i-1,i}$\\(non-standard)
$c_{i,i+1}$\\(non-standard) \node[draw]{$c_{i,i-k}$\\(standard)};  ] ] ] ] ]
\end{tikzpicture} 
\section{Non-trivial SEPs as arrays of 0s \& 1s} \label{sec:Representation of Non-trivials SEPs by 0s and 1s}
The results of the previous section are applied herein to represent each non-trivial SEP by a finite array of $0$s and 1s in one-to-one fashion. It provides in (\ref{preclosed form 1}) an alternative expression  to the recurrence (\ref{determinant of LHM}) leading closer to the desired expression in (\ref{closed form}). 
\subsection{The representation theorem}
\label{sec:AssingingArraysToSEPs}
In the rest of this paper $\2^n$ will stand for the set of functions from  $\{1,2,...,n\}$ to $\{0,1\}$, that is  $\2^n=\{(r_1,r_2,...,r_{n-1},r_n): r_i=0\ {\rm or}\ 1\}$. The set $\2^n$ can be identified with the segment of the non-negative binary integers up to and including the binary integer $2^{n}-1$ (see section \ref{sec:ElementaryIntegerFunctions}).  
The set $\mathfrak{R}_n$ is defined as the subset of $\2^n$ consisting of the elements of $\2^n$ whose last component is $r_n=1$, that is:
$\mathfrak{R}_n=\{r\in \2^n: r_n=1\}$. Evidently $\card(\2^n)=2^n$ and $\card(\mathfrak{R}_n)=2^{n-1}$.
An element $r\in\mathfrak{R}_n$ will be denoted as $r=(r_1,r_2,...,r_{n-1},1)$. In the following definition we introduce a simple rule for associating arrays in $r\in\mathfrak{R}_n$ with  non-trivial SEPs in $\SEPs_n$.

\begin{definition}\label{def:SEPs to Binaries}
We define the function $f^{(n)}:\SEPs_n\ni C=c_{1,\per_1}c_{2,\per_2}...c_{n,\per_n}\mapsto f^{(n)}(C)\in \2^n$ by:
\begin{equation}\label{def of f}
f^{(n)}(C)\stackrel{{\rm def}}{=}(r_1,r_2,...,r_{n-1},r_n): 
r_i=\left\{\begin{array}{ll} 
0, & {\rm if}\ \per_i=i+1\vspace{0.1in}\\
1,  & {\rm if}\ \per_i\not=i+1 \end{array}\right.
\end{equation}
That is, every $C=c_{1,\per_1}c_{2,\per_2}...c_{n,\per_n}\in{\mathcal{E}}_n$ is mapped through $f^{(n)}$ to $r=(r_1,r_2,...,r_{n-1},r_n)\in \2^n$, according to the rule: $r_i=0$, whenever $c_{i,\per_i}$ is non-standard or $r_i=1$, whenever $c_{i,\per_i}$ is standard.
\end{definition}
As the elements of the last row are all standard factors, the last component of $f^{(n)}(C)$ is $1$, that is: $f^{(n)}(C)\in \mathfrak{R}_n$. Therefore, $f^{(n)}:\SEPs_n\ni C\mapsto f^{(n)}(C)\in \mathfrak{R}_n$. The assumption $r_i=0$ entails that $i\not=n$, that is: $1\le i\le n-1$.
\begin{theorem}[Representation] \label{repres is bijective} The function $f^{(n)}: \SEPs_n \mapsto \mathfrak{R}_n$ in \emph{Definition} \emph{\ref{def:SEPs to Binaries}} is bijective. 
\end{theorem}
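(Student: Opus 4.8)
The plan is to prove that $f^{(n)}$ is a bijection between two finite sets of equal cardinality $2^{n-1}$ (by Proposition \ref{positively signed}(ii) and the count $\card(\mathfrak{R}_n)=2^{n-1}$), so it suffices to establish either injectivity or surjectivity. I would prove both directions explicitly since each is instructive, but the core effort goes into showing that $f^{(n)}$ is \emph{injective}: a non-trivial SEP $C=c_{1,\per_1}\cdots c_{n,\per_n}$ is completely determined by the string of labels $(r_1,\dots,r_{n-1},1)$ recording which factors are standard and which are non-standard. The key observation is that the column indices $\per_1,\dots,\per_n$ can be reconstructed one row at a time from the $r_i$: by Proposition \ref{immediate successors 2}, at each row $i<n$ there are exactly two candidate immediate successors of the initial string built so far, one standard and one (namely $c_{i,i+1}$) non-standard, so the bit $r_i$ picks out which one. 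Proposition \ref{standard IS} then pins down exactly which standard factor is selected when $r_i=1$: it is $c_{i,i-k}$, where $k\ge 0$ is the number of consecutive non-standard factors immediately preceding position $i$ in $C$ (with $c_{i,i}$ when $k=0$ and $c_{i,1}$ when $k=i-1$). Since $k$ is itself determined by the already-read bits $r_{i-1},r_{i-2},\dots$, the whole sequence $(\per_i)$ is a deterministic function of $(r_i)$.

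Concretely, I would argue as follows. Fix $r=(r_1,\dots,r_{n-1},1)\in\mathfrak{R}_n$ and define a candidate SEP inductively: set $\per_0=0$; having defined $\per_1,\dots,\per_{i-1}$ forming an initial string $C[i-1]\in\ISCs[i-1]$, put $\per_i=i+1$ if $r_i=0$ (legitimate by Proposition \ref{immediate successors 2}(i), which guarantees $c_{i,i+1}$ is always an IS for $i\le n-1$), and if $r_i=1$ let $k$ be the largest integer with $0\le k\le i-1$ such that $r_{i-1}=r_{i-2}=\dots=r_{i-k}=0$ (i.e. the factors $c_{i-k,\per_{i-k}},\dots,c_{i-1,\per_{i-1}}$ are all non-standard), and set $\per_i=i-k$; for $i=n$, Proposition \ref{immediate successors 2}(ii) forces the unique standard IS, which is again $c_{n,n-k}$ by the same reasoning, consistent with $r_n=1$. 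One checks that at each stage this $c_{i,\per_i}$ is indeed a valid IS of $C[i-1]$ (Propositions \ref{standard IS} and \ref{immediate successors 1}), so the process never stalls and yields a genuine non-trivial SEP $C\in\SEPs_n$ with $f^{(n)}(C)=r$; this proves surjectivity. Running the same recursion shows that any $C'\in\SEPs_n$ with $f^{(n)}(C')=r$ must have $\per'_i=\per_i$ for every $i$ (each step is forced), hence $C'=C$, which is injectivity. Either conclusion, combined with the equality of cardinalities, gives bijectivity; I would state the cardinality equality as the clean finish.

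The main obstacle is the bookkeeping in the inductive reconstruction step when $r_i=1$: one must correctly identify the run-length $k$ of trailing non-standard factors and invoke the right case of Proposition \ref{standard IS}. Care is needed at the boundary cases $k=0$ (the predecessor $c_{i-1,\per_{i-1}}$ is standard, so Proposition \ref{standard IS}(i) gives $\per_i=i$) and $k=i-1$ (all predecessors non-standard, so Proposition \ref{standard IS}(ii) gives $\per_i=1$), and in verifying that the reconstructed $\per$ is actually a bijection of $\{1,\dots,n\}$ — but this last point is already handled by the construction inside the proof of Proposition \ref{immediate successors 1}, which exhibits, for any valid initial string $C[i]$, an extension to a full non-trivial SEP, so I can cite it rather than redo it. A secondary subtlety is making sure the definition of $k$ uses the \emph{bits} $r_{i-1},\dots,r_{i-k}$ (available from $r$ alone) rather than the factors themselves, so that the recursion is manifestly a function of $r$; this is exactly what makes the map well-defined in reverse and is the crux of injectivity.
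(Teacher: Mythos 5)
Your proposal is correct and follows essentially the same route as the paper: equality of cardinalities ($2^{n-1}$ on both sides) reduces the problem to one direction, and the factors of a SEP are forced bit-by-bit from $r$ via Proposition \ref{immediate successors 2} and the three cases of Proposition \ref{standard IS} (standard predecessor, a run of $k$ non-standard predecessors, all predecessors non-standard), which is exactly the paper's injectivity argument. Your additional explicit reconstruction giving surjectivity is sound but redundant once the cardinality count is in hand.
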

\begin{proof}
Since the set $\mathfrak{R}_n$ and the set ${\mathcal{E}}_n$ have the same number of elements ($2^{n-1}$) it suffices to show that $f^{(n)}$ is injective. Let us consider  $C=c_{1,\per_1}c_{2,\per_2}...c_{n,\per_n}$ and $P=c_{1,l_1}c_{2,l_2}...c_{n,l_n}$ in ${\mathcal{E}}_n$ such that $f^{(n)}(C)=f^{(n)}(P)$. We need to show that $C=P$ or equivalently that $\per=l$. Let us call $r=f^{(n)}(C)=f^{(n)}(P)$ and $r=(r_1,r_2,...,r_{n-1},1)$. We examine the following cases:
\begin{description}
\item[I)] If $r_i=0$, then, by Definition \ref{def:SEPs to Binaries}, the $i$th non-trivial factors both of $C$ and $P$ are non-standard, whence they coincide with the factor $c_{i,i+1}$. Thus $\per_i=l_i=i+1$.\vspace{-0.07in} 
\item[II)] If $r_i=1$, then Definition \ref{def:SEPs to Binaries} implies that the $i$th factors of $C$ and $P$ are standard. In this case, there are three possible subcases:\vspace{-0.07in}
\begin{description}
\item[IIa)] $r_i=r_{i-1}=1$ and $2\le i\le n$. \\
It follows from  Definition \ref{def:SEPs to Binaries} that the factors $c_{i-1\per_{i-1}},c_{i\per_i}$ and $c_{i-1l_{i-1}},c_{il_i}$ are all standard. By virtue of Proposition \ref{standard IS} statement (\textit{i}) the IS of any standard factor is the unique factor $c_{i,i}$. Thus $\per_i=l_i=i$.\vspace{-0.08in}
\item[IIb)] $r_i=r_{i-1-k}=1$ and $r_{m}=0$ for all $m$ such that $i-k\le m\le i-1$, whenever $3\le i\le n$ and $1\le k\le i-2$.\\
It follows from Definition \ref{def:SEPs to Binaries} that the factors $c_{i-1-k\per_{i-1-k}},c_{i\per_i}$ and $c_{i-1-kl_{i-1-k}},c_{il_i}$ are standard, while the strings $S_1=c_{i-k,\per_{i-k}},...,c_{i-1, \per_{i-1}}$ and $S_2=c_{i-k,l_{i-k}},...,c_{i-1, l_{i-1}}$  consist entirely of non-standard factors. Proposition \ref{standard IS} statement (\textit{ii}) implies that the ISs of $S_1$ and $S_2$ coincide with the factor $c_{i,i-k}$, whence $\per_i=l_i=i-k$.\vspace{-0.08in}
\item[IIc)] $r_i=1$ and $r_{m}=0$ for all $m=1,2,...,i-1$.\\
It follows from Definition \ref{def:SEPs to Binaries} that $c_{i\per_i}$ and $c_{il_i}$ are standard, while the initial strings $S_1=c_{1,\per_1},...,c_{i-1, \per_{i-1}}$ and $S_2=c_{1,l_1},...,c_{i-1, l_{i-1}}$  consist entirely of non-standard factors. Proposition \ref{standard IS} statement (\textit{iii}) implies that the ISs of $S_1$ and of $S_2$ coincide with $c_{i,1}$, whence $\per_i=l_i=1$.\vspace{-0.1in}
\end{description}
\end{description} 
The proof of the Theorem is complete.
\end{proof}
As an illustrative example, consider the non-trivial SEP:  $T=c_{1,1}c_{2,3}c_{3,2}c_{4,5}...c_{n-2, n-1}c_{n-1, 4}c_{n, n}\in\SEPs_n$ for $n\ge 8$. $T$ is represented by the array $r=(1,0,1,0,0,...,0,1,1)\in\mathfrak{R}_n$, that is $f^{(n)}(T)=r$.
Next, we verify that the inverse image of $r$ is $T$, that is $(f^{(n)})^{-1}(r)=T$. By Definition \ref{def of f} the non-standard factors occupy the same positions as the 0s in $r$, that is, the positions $i=2,4,5,...,n-2$ are occupied by the non-standard factors $c_{2,3},c_{4,5},c_{5,6},...,c_{n-2,n-1}$, respectively. Since $r_1=1$, it follows that the (unique) standard factor of $T$ is $c_{1,1}$. As $r_1=1, r_2=0, r_3=1$, Proposition \ref{standard IS}\ (\textit{ii}) entails that the third factor of $T$ is $c_{3,3-1}=c_{3,2}$. By analogy, as the number of consecutive $0$s between $r_3=1$ and $r_{n-1}=1$ is $k=n-5$, on account of $n-1-(n-5)=4$, the $(n-1)$th factor of $T$ is $c_{n-1,4}$. As $r_{n}=1$ and $r_{n-1}=1$, Proposition \ref{standard IS}\ (\textit{i}) entails that k=0, and therefore the last factor of $T$ is  $c_{n,n}$, as expected.
\subsection{An intermediate Hessenbergian expression} \label{The pre-closed form Hessenbergian}
Throughout this paper $\varphi^{(n)}$ stands for the inverse function of $f^{(n)}$, that is $\varphi^{(n)}=(f^{(n)})^{-1}$. Moreover in the determinant expansion formulas each SEP represents its
product value: $\prod_{i=1}^{n}c_{i,\per_i}$.  
Taking into account that the determinant expansion of $\Hes_n$ in (\ref{Modified Hessenberg Matrix})\vspace{0.02in} is the sum of all non-trivial SEP product values, Theorem \ref{repres is bijective} entails  that every term in the sum of $\det(\Hes_n)=\sum_{C\in {\mathcal{E}}_n}C$\vspace{0.02in} can be replaced by $\varphi^{(n)}(r)$, $r\in \mathfrak{R}_n$, that is
\begin{equation} \label{preclosed form 1}
\det(\Hes_n)=\displaystyle\sum_{r\in \mathfrak{R}_n}\varphi^{(n)}(r).
\end{equation}
The expression in (\ref{preclosed form 1}) consists entirely of $\card(\mathfrak{R}_n)=2^{n-1}$ distinct non-trivial SEPs.
The disadvantage of this formula is related to the fact that the sum variable ranges over the set of arrays in $\mathfrak{R}_n$.
\section{Hessenbergian closed form via elementary integer functions}
\label{sec:ElementaryIntegerFunctions}
In this section we introduce a suitable function which associates integers from  $\I_{n-1}=\{0,1,...,2^{n-1}-1\}$ with arrays in $\mathfrak{R}_n$ in one-to-one fashion. This will enable us to replace the indexing set $\mathfrak{R}_n$ in (\ref{preclosed form 1}) with  $\I_{n-1}$,  $n\in\integers^+$, leading to the closed form of $\det(\Hes_n)$.

Throughout the paper,  ${\mathcal{B}}_n$ denotes the set of binary integers from $0$ up to and including the number 
\[\begin{array}{lcc}\1_n=&\underbrace{11...1}& (n\ {\rm number\ of} \ 1s)\\
                 & n &\end{array}\vspace{-0.1in}\]
that is ${\mathcal{B}}_n=\{0,1,10,...,\1_n\}$. Evidently ${\mathcal{B}}_n$ consists of $2^n$ binary numbers. The binary representation of the integer $2^n-1$ is $\1_n$, that is $[2^n-1]_2=\1_n$.

Let $b\in {\mathcal{B}}_n$ with $b\not= 0$. By completing the binary figures of $b=1r_{k+1}...r_{n}$ by $k-1$ zero digits at its left up to and including the binary figure $2^{n-1}$, we adhere to the standard conventions
\[\begin{array}{clllc}1r_{k+1}...r_{n}\equiv & 0\ 0\ ...\ 0 & 1\ r_{k+1}...& r_{n} \ \ &{\rm and}\\  
&\uparrow  &\uparrow  &\uparrow&\\
{\rm Binary \ Figures:}& 2^{n-1}  & 2^{n-k}  & 2^0\ {\rm (units)}& \end{array} \ \begin{array}{cll} \textbf{0}_n\equiv &0\ \ 0 \ \ ...&0 \\
 &\uparrow  &\uparrow\\
 & 2^{n-1}  & 2^0 
 \end{array}\]
which lead to the identification of the elements of $\2^n$ with  ${\mathcal{B}}_n$. For example, if $n=5$, then the binary integer $11\in{\mathcal{B}}_5$ is identified with the array $(0,0,0,1,1)\in\2^5$.
 
Taking into account that $\card({\mathcal{B}}_{n-1})=\card(\mathfrak{R}_n)=2^{n-1}$, we define the bijection $\rho^{(n)}: {\mathcal{B}}_{n-1}\mapsto \mathfrak{R}_n$:
\begin{equation} \label{bijection rho} \begin{array}{cccccccccc}
\rho^{(n)}\!\!\!\!\!\!&(\underbrace{00...01r_{k+1}...r_{n-1}})\!\!\!\!&=\!\!\!\!&(\underbrace{0,0,...,0,1,r_{k+1},...,r_{n-1},1})&  {\rm and} &
\rho^{(n)}\!\!\!\!\!\!&(\underbrace{00...0})\!\!\!\!&=\!\!\!\!&(\underbrace{0,0,...,0,1})& \\
&  n-1  & &  n   & & &  n-1  & &  n  \end{array} \end{equation}
By identifying ${\mathcal{B}}_{n-1}$ with $\mathfrak{R}_n$ through  $\rho^{(n)}$, the function $\varphi^{(n)}$ defined above associates every binary integer $r$ in ${\mathcal{B}}_{n-1}$ with the SEP $\varphi^{(n)}(r)$.
\subsection{Nested divisions}
\label{sec:The Greatest Integer and the Modulo Functions}
Let $\kappa\in\integers^*$ and $\lambda\in\integers^+$. The largest integer not greater than the rational number $\kappa/\lambda$, will be denoted as $\lfloor \kappa/\lambda \rfloor$. 
Also $\lceil \kappa/\lambda\rceil$ denotes the smallest integer not less than $\kappa/\lambda$. The notation $\lfloor \kappa/\lambda \rfloor$ coincides with the quotient of the Euclidean division of $\kappa$ by $\lambda$, also known as \emph{integral part} (or integer part) of the number $\kappa/\lambda$.  We adopt the method of converting an integer $m\in \I_{n-1}$ into a binary number $[m]_2=r_1r_2...r_{n-1}\in{\mathcal{B}}_{n-1}$ based on the Euclidean division. The digits $r_i$ in $[m]_2$ are the remainders of nested divisions:
\[m=2q_{n-1}+r_{n-1}, q_{n-1}=2q_{n-2}+r_{n-2}, ..., q_{2}=2q_{1}+r_1.\]
Taking into account that 
\[\begin{array}{ccc}q_{n-1}=\lfloor m:2\rfloor,\ q_{n-2}=\lfloor\lfloor m:2\rfloor:2\rfloor, ..., q_1=\lfloor\lfloor...\lfloor\lfloor m:\!\!\!\!\!&\underbrace{2\rfloor:2\rfloor...\rfloor :2}&\!\!\!\!\rfloor\\
 \!\!\!\!\!&            n-1       &  \end{array} \]
the $r_i$s in $[m]_2$ can be expressed in terms of the greatest integer function ``$\lfloor \, \rfloor$" and of the modulo function:
\begin{equation} \label{eq. of rs} 
\begin{array}{ccc} r_{n-1}= m\!\!\!\!\! \mod 2,\ r_{n-2}=\lfloor m\!:\!2\rfloor\!\!\!\!\! \mod 2,..., r_1=\lfloor\lfloor...\lfloor\lfloor m\!:\!\!\!\!\!&\underbrace{2\rfloor:\!2\rfloor...\rfloor \!:\!2}&\!\!\!\!\rfloor\!\!\!\!\!  \mod 2\\
 \!\!\!\!\!&            n-2       &  \end{array}\end{equation}
We can also write $r_{n-1}=\lfloor m:2^0\rfloor\! \mod 2$, which leads to the unified expression 
\[\begin{array}{ccc} r_{i}=\lfloor\lfloor...\lfloor\lfloor m:\!\!\!\!\!&\underbrace{2\rfloor:2\rfloor...\rfloor :2}&\!\!\!\!\rfloor\!\!\!\! \mod 2,\\
 \!\!\!\!\!&            n-i-1       &
\end{array}\]
where $1\le i\le n-1$.
\\In the following Proposition we provide a condensed expression for nested divisions. 
\begin{proposition}\label{nestdiv}
Let $m\in \integers$. The following identity of nested divisions holds:
\begin{equation}\label{nested divisions} 
\begin{array}{ccc} \lfloor\lfloor...\lfloor\lfloor m:\!\!\!\!\!&\underbrace{2\rfloor:2\rfloor...\rfloor :2}&\!\!\!\!\rfloor=\lfloor m:2^k\rfloor.\\                        &   k                   & 
\end{array}\end{equation}
\end{proposition}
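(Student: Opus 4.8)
The plan is to prove the identity \eqref{nested divisions} by induction on $k\ge 0$. The base case $k=0$ is trivial: the left-hand side is the empty nest of divisions, which by convention equals $m$, and the right-hand side is $\lfloor m:2^0\rfloor=\lfloor m\rfloor=m$. The inductive step reduces the claim to a single algebraic fact about the interaction of the floor function with iterated division, namely the ``nesting law''
\[
\left\lfloor \frac{\lfloor \kappa/\lambda\rfloor}{\mu} \right\rfloor = \left\lfloor \frac{\kappa}{\lambda\mu} \right\rfloor,
\qquad \kappa\in\integers,\ \lambda,\mu\in\integers^+ .
\]
Granting this, suppose \eqref{nested divisions} holds for some $k\ge 0$. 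Applying one more outermost division by $2$ to the $k$-fold nest and using the induction hypothesis on the inner part, the $(k{+}1)$-fold nest equals $\lfloor \lfloor m:2^k\rfloor : 2\rfloor$; the nesting law with $\lambda=2^k$, $\mu=2$ then gives $\lfloor m : 2^{k+1}\rfloor$, completing the induction.

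The only real content is therefore the nesting law. I would establish it directly from the Euclidean division of $\kappa$ by $\lambda$: write $\kappa = \lambda q + s$ with $q=\lfloor\kappa/\lambda\rfloor$ and $0\le s\le \lambda-1$. Then $\lfloor\kappa/(\lambda\mu)\rfloor = \lfloor (\lambda q + s)/(\lambda\mu)\rfloor = \lfloor q/\mu + s/(\lambda\mu)\rfloor$, and since $0\le s/(\lambda\mu) < 1/\mu \le 1$, adding the fractional term $s/(\lambda\mu)$ to $q/\mu$ cannot push the value up to the next integer beyond $\lfloor q/\mu\rfloor$; more precisely, writing $q = \mu t + u$ with $0\le u\le\mu-1$ shows both sides equal $t=\lfloor q/\mu\rfloor = \lfloor \lfloor\kappa/\lambda\rfloor/\mu\rfloor$, because $0 \le u/\mu + s/(\lambda\mu) < (\mu-1)/\mu + 1/\mu = 1$.

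I expect the main (and essentially only) obstacle to be bookkeeping: making sure the underbrace count on the left of \eqref{nested divisions} is handled correctly when one peels off the outermost division, i.e.\ that a $(k{+}1)$-fold nest is unambiguously ``divide by $2$, then apply the $k$-fold nest to the result'' rather than ``apply the $k$-fold nest, then divide,'' and that the two readings agree (they do, by associativity of composition). Once the nesting law is in hand the induction is immediate, so the proof is short; the delicate point is purely notational, namely aligning the recursive decomposition of the nested-division expression with the statement's underbrace convention.
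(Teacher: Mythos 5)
Your proposal is correct and takes essentially the same route as the paper: induction on $k$ with the base case $k=0$, where the inductive step rests on the collapse identity $\lfloor\lfloor \kappa/\lambda\rfloor/\mu\rfloor=\lfloor \kappa/(\lambda\mu)\rfloor$, which is exactly the paper's key lemma (\ref{nested division identity 2}). The only (minor) difference is that you prove that lemma self-containedly by two Euclidean divisions, whereas the paper derives it from the standard fact $\lfloor\lfloor x\rfloor /p\rfloor=\lfloor x/p\rfloor$ for real $x$; both arguments cover the stated generality $m\in\integers$.
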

\begin{proof} Let $x$ be a real number and $p,q$ be positive integers.
We shall use the well known identity
\begin{equation} \label{nested division identity 1} \lfloor\lfloor x\rfloor:p\rfloor=\lfloor x:p\rfloor.
\end{equation}
Taking into account that $ (x:q):p=x:(p\cdot q)$, it follows from (\ref{nested division identity 1}) that:
\begin{equation} \label{nested division identity 2} 
\lfloor\lfloor x:q\rfloor:p\rfloor=\lfloor (x:q):p\rfloor=\lfloor x:(p\cdot q)\rfloor.
\end{equation}
To verify (\ref{nested divisions}) we use induction on $k\in \integers^*$. As $m=\lfloor m:2^0\rfloor$ the identity holds for $k=0$. 
\\Let us assume that the identity (\ref{nested divisions}) holds for $k=n$, that is: 
\[ \begin{array}{ccc} \lfloor\lfloor...\lfloor\lfloor m:\!\!\!\!\!&\underbrace{2\rfloor:2\rfloor...\rfloor :2}&\!\!\!\!\rfloor=\lfloor m:2^n\rfloor.\\ 
&    n                   & 
\end{array}
\]
In view of (\ref{nested division identity 2}) we have 
\[\begin{array}{ccc} \lfloor\!\!&\underbrace{\lfloor\lfloor...\lfloor\lfloor m:2\rfloor:2\rfloor...\rfloor :2\rfloor}&\!\! : 2\rfloor= \lfloor\lfloor m:2^n\rfloor:2\rfloor=\lfloor m:2^{n+1}\rfloor,\\ 
           &   \lfloor m:2^n\rfloor   & 
\end{array}\]
which completes the induction.
\end{proof}
\subsection{The main result}
The binary equivalent $[m]_2=r_1r_2...r_{n-1}\in{\mathcal{B}}_{n-1}$ of $m\in\I_{n-1}$ can be expressed, as described in (\ref{eq. of rs}) and (\ref{nested divisions}), in terms of elementary integer functions as follows: 
\begin{equation} \label{binary equvalent}
[m]_2\!=\!(\lfloor m\!:\!2^{n-2}\rfloor\!\!\!\!\!\mod\! 2, \lfloor m\!:\!2^{n-3}\rfloor\!\!\!\!\!\mod\! 2,...,\lfloor m\!:\!2^0\rfloor\!\!\!\!\!\mod\! 2).\end{equation}
The relation (\ref{binary equvalent}) induces the bijective transformation:
\begin{equation} \label{beta} 
\beta^{(n)}: \I_{n-1}\ni m \mapsto \beta^{(n)}(m)=[m]_2\in {\mathcal{B}}_{n-1}.\end{equation}
The composite $\tau^{(n)}\stackrel{{\rm def}}{=}\rho^{(n)}\circ \beta^{(n)}$ determines a bijection, which converts non-negative integers into arrays in $\mathfrak{R}_n$:
\begin{equation} \label{tau bijection}
\tau^{(n)}(m)\!=\!(\lfloor m\!:\!2^{n-2}\rfloor\!\!\!\!\!\mod\! 2, \lfloor m\!:\!2^{n-3}\rfloor\!\!\!\!\!\mod\! 2,...,\lfloor m\!:\!2^0\rfloor\!\!\!\!\!\mod\! 2,1).\end{equation}
Moreover, for every $n\in\naturals$ and every $m\in\naturals$ such that $m<2^{n-1}$:
\[[\tau^{(n)}(m)]_{10}=2m+1.\]
Finally, the composition of $\varphi^{(n)}$ (introduced in section \ref{sec:Representation of Non-trivials SEPs by 0s and 1s}) and $\tau^{(n)}$ yields the bijection $\chi^{(n)}\stackrel{{\rm def}}{=}\varphi^{(n)}\circ \tau^{(n)}$:
\[\begin{array}{rccl}
 &                        &  \tau^{(n)}                 &                       \\
&              \I_{n-1}  &   \longrightarrow     &  \mathfrak{R}_n \vspace{0.1in}\\
& \hspace{0.3in}\chi^{(n)}         &    \searrow           &  \downarrow  \ \ \ \varphi^{(n)}  \vspace{0.1in}\\
&                        &                  &{\mathcal{E}}_{n}    \end{array}\]
The composite  $\chi^{(n)}$ associates integers from $\I_{n-1}$ to complex numbers (the product values of the SEPs) and is defined once $\Hes_n$ is given.
The results of this section enable us to modify (\ref{preclosed form 1}) in order to reach a closed form (our initial quest) of $\det(\Hes_n)$ as will be described in the following Theorem.
\begin{theorem}\label{Closed form of Hessenbergian}
The closed form of $\det(\Hes_n)$ is:
\begin{equation}\label{closed form}
\det(\Hes_n)=\sum_{m=0}^{2^{n\!-\!1}-1}\chi^{(n)}(m).
\end{equation}
\end{theorem}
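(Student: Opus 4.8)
The plan is to assemble the closed form by composing the two ingredients already in place: the intermediate expression \eqref{preclosed form 1}, which rewrites $\det(\Hes_n)$ as a sum over arrays $r\in\mathfrak{R}_n$ of the product values $\varphi^{(n)}(r)$, and the bijection $\tau^{(n)}:\I_{n-1}\to\mathfrak{R}_n$ built from $\rho^{(n)}$ and $\beta^{(n)}$ in \eqref{tau bijection}. First I would recall \eqref{preclosed form 1}, then observe that $\tau^{(n)}$ is a bijection from $\I_{n-1}=\{0,1,\dots,2^{n-1}-1\}$ onto $\mathfrak{R}_n$, so reindexing the sum by $r=\tau^{(n)}(m)$ is legitimate: as $m$ ranges over $\I_{n-1}$, the array $\tau^{(n)}(m)$ ranges over all of $\mathfrak{R}_n$ exactly once. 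Substituting gives $\det(\Hes_n)=\sum_{m=0}^{2^{n-1}-1}\varphi^{(n)}(\tau^{(n)}(m))$, and by the definition $\chi^{(n)}\stackrel{{\rm def}}{=}\varphi^{(n)}\circ\tau^{(n)}$ the right-hand side is exactly $\sum_{m=0}^{2^{n-1}-1}\chi^{(n)}(m)$, which is \eqref{closed form}.

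The one point that needs genuine justification is the bijectivity of $\tau^{(n)}$. I would establish this by the factorization $\tau^{(n)}=\rho^{(n)}\circ\beta^{(n)}$: the map $\beta^{(n)}:\I_{n-1}\to{\mathcal{B}}_{n-1}$ sending $m$ to its binary equivalent $[m]_2$ is a bijection because every integer in $[0,2^{n-1}-1]$ has a unique binary representation with at most $n-1$ digits (uniqueness of the nested-division remainders, with \eqref{nested divisions} supplying the closed form for each digit), and $\rho^{(n)}:{\mathcal{B}}_{n-1}\to\mathfrak{R}_n$ is the identification bijection of \eqref{bijection rho} that appends a trailing $1$ (and pads with leading zeros), which is visibly invertible by stripping the last coordinate. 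A composition of bijections is a bijection, so $\tau^{(n)}$ is one; the explicit formula \eqref{tau bijection} follows by composing \eqref{binary equvalent} with \eqref{bijection rho}. Then $\varphi^{(n)}=(f^{(n)})^{-1}$ is a bijection by the Representation Theorem~\ref{repres is bijective}, hence $\chi^{(n)}=\varphi^{(n)}\circ\tau^{(n)}$ is a bijection from $\I_{n-1}$ onto $\SEPs_n$.

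There is no real obstacle here; the content has all been front-loaded into \eqref{preclosed form 1}, Theorem~\ref{repres is bijective}, Proposition~\ref{nestdiv}, and the construction of $\rho^{(n)},\beta^{(n)},\tau^{(n)}$. The mild care required is bookkeeping: checking that the cardinalities match ($\card(\I_{n-1})=\card({\mathcal{B}}_{n-1})=\card(\mathfrak{R}_n)=\card(\SEPs_n)=2^{n-1}$, the last equality from Proposition~\ref{positively signed}(ii)) so that injectivity of each map suffices for bijectivity, and confirming that the summand $\chi^{(n)}(m)$ in \eqref{closed form} is interpreted as the product value $\prod_{i=1}^{n}c_{i,\per_i}$ of the SEP $\varphi^{(n)}(\tau^{(n)}(m))$, consistent with the convention fixed just before \eqref{preclosed form 1}. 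I would close by remarking that, unlike \eqref{preclosed form 1}, the sum in \eqref{closed form} is indexed by an initial segment of the integers, which is the sense in which it is a closed form.
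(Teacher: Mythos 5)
Your proposal is correct and follows essentially the same route as the paper: reindex the sum in (\ref{preclosed form 1}) via the bijection $\tau^{(n)}:\I_{n-1}\to\mathfrak{R}_n$ and invoke the definition $\chi^{(n)}=\varphi^{(n)}\circ\tau^{(n)}$. Your extra care in spelling out the bijectivity of $\tau^{(n)}$ through the factorization $\rho^{(n)}\circ\beta^{(n)}$ is a harmless elaboration of what the paper takes as already established.
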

\begin{proof}
As $\tau^{(n)}:\I_{n-1}\ni m \mapsto \tau^{(n)}(m)\in \mathfrak{R}_n$ is bijective, every $r\in \mathfrak{R}_n$ can be replaced in (\ref{preclosed form 1}) by $\tau^{(n)}(m)$, $m\in \I_{n-1}$. Taking into account that $\chi^{(n)}$ is bijective, (\ref{preclosed form 1}) takes the form\vspace{-0.05in}
\[\det(\Hes_n)\!=\!\displaystyle\sum_{r\in \mathfrak{R}_n}\varphi^{(n)}(r) \!=\!\sum_{m\in\I_{n-1}} \varphi^{(n)}(\tau^{(n)}(m))
\!=\!\sum_{m=0}^{2^{n\!-\!1}-1} \chi^{(n)}(m),\]
as required.
\end{proof}
\subsection*{Examples}
\label{sec:Example}
To illustrate the closed form of $\det(\Hes_n)$ in (\ref{closed form}), we consider the Hessenbergians of order: $n=2,3,4$.
 
The expansion of $\det(\Hes_{2})$ consists of $2^1$ non-trivial SEPs, and $\I_1=\{0,1\}$. In view of (\ref{tau bijection}), the arrays $\tau^{(2)}(m)\in \mathfrak{R}_2$  are given by:
\[\begin{array}{lll}
\tau^{(2)}(0)&=&(\lfloor 0: 2^{2-2}\rfloor\!\!\!\!\mod 2,1)=(0\!\!\!\!\mod 2,1 )=(0,1)\\
\tau^{(2)}(1)&=&(\lfloor 1: 2^{2-2}\rfloor\!\!\!\!\mod 2,1)=(1\!\!\!\!\mod 2,1 )=(1,1).
       \end{array}
\]
Recalling that the non-standard factors are opposite-sign entries of the $\Hes_n$ superdiagonal, the non-trivial SEPs of $\Hes_2$ are:
\[\displaystyle \chi^{(2)}(0)= \varphi^{(2)}(\tau^{(2)}(0))=\varphi^{(2)}(0,1)=-h_{1,2}h_{2,1},\ \
\displaystyle \chi^{(2)}(1)=\varphi^{(2)}(\tau^{(2)}(1))=\varphi^{(2)}(1,1)=h_{1,1}h_{2,2}.
\]
Thus,\vspace{-0.1in} 
\[\det(\Hes_2)=\displaystyle\sum_{m=0}^1\chi^{(2)}(m)=h_{1,1}h_{2,2}-h_{1,2}h_{2,1}.\]

The expansion of $\det(\Hes_{3})$ consists of $2^2$ non-trivial SEPs, and $\I_2=\{0,1,2,3\}$. The arrays $\tau^{(3)}(m)\in \mathfrak{R}_3$  are given by:
\[\begin{array}{llccc}
\tau^{(3)}(0)\!\!\!\!&=(&\!\!\!\lfloor 0: 2^{3-2}\rfloor\!\!\!\!\mod 2,&\lfloor 0: 2^{3-3}\rfloor\!\!\!\!\mod 2,&1)\\
               &=(&\!\!\! 0\!\! \mod 2,&\ 0\!\! \mod 2,& 1)\\
               &=(&\!\!\!0,&0,&1)\vspace{0.1in}\\
\tau^{(3)}(1)\!\!\!\!&=(&\!\!\!\lfloor 1: 2^{3-2}\rfloor\!\!\!\!\mod 2,&\lfloor 1: 2^{3-3}\rfloor\!\!\!\!\mod 2,&1)\\
               &=(&\!\!\! 0\!\! \mod 2,&\ 1\!\! \mod 2,& 1)\\
               &=(&\!\!\!0,&1,&1)\vspace{0.1in}\\
\tau^{(3)}(2)\!\!\!\!&=(&\!\!\!\lfloor 2: 2^{3-2}\rfloor\!\!\!\!\mod 2,&\lfloor 2: 2^{3-3}\rfloor\!\!\!\!\mod 2,&1)\\
&=(&\!\!\! 1\!\! \mod 2,&\ 2\!\! \mod 2,& 1)\\
&=(&\!\!\!1,&0,&1) \end{array}\]
\[\begin{array}{llccc}
\tau^{(3)}(3)\!\!\!\!&=(&\!\!\!\lfloor 3: 2^{3-2}\rfloor\!\!\!\!\mod 2,&\lfloor 3: 2^{3-3}\rfloor\!\!\!\!\mod 2,&1)\\
&=(&\!\!\! 1\!\! \mod 
2,&\ 3\!\! \mod 2,&1)\\
&=(&\!\!\!1,&1,&1)
\end{array}\]
The non-trivial SEPs of $\Hes_{3}$ are listed below:
\[\begin{array}{l} 
\displaystyle \chi^{(3)}(0)=\varphi^{(3)}(\tau^{(3)}(0))=\varphi^{(3)}(0,0,1)=-h_{1,2}(-h_{2,3})h_{3,1}=h_{1,2}h_{2,3}h_{3,1}\vspace{0.05in}\\
\displaystyle \chi^{(3)}(1)=
\varphi^{(3)}(\tau^{(3)}(1))=\varphi^{(3)}(0,1,1)=-h_{1,2}h_{2,1}h_{3,3}\vspace{0.05in}\\
\displaystyle \chi^{(3)}(2)=
\varphi^{(3)}(\tau^{(3)}(2))=\varphi^{(3)}(1,0,1)=h_{1,1}(-h_{2,3})h_{3,2}=-h_{1,1}h_{2,3}h_{3,2}\vspace{0.05in}\\
\displaystyle \chi^{(3)}(3)=
\varphi^{(3)}(\tau^{(3)}(3))=\varphi^{(3)}(1,1,1)=h_{1,1}h_{2,2}h_{3,3}
\end{array}  \]
Thus,\vspace{-0.1in} 
\[\det(\Hes_3)=\displaystyle\sum_{m=0}^3\chi^{(3)}(m)=h_{1,2}h_{2,3}h_{3,1}-h_{1,2}h_{2,1}h_{3,3}-h_{1,1}h_{2,3}h_{3,2}+h_{1,1}h_{2,2}h_{3,3}.\]

Let us finally consider the $\det(\Hes_{4})$.
It consists of $2^3$ non-trivial SEPs, and $\I_3=\{0,1,...,7\}$. The arrays $\tau^{(4)}(m)\in \mathfrak{R}_4$ are given by:
\[\begin{array}{llcccc}
\tau^{(4)}(0)\!\!\!\!&=(&\!\!\!\lfloor 0: 2^{4-2}\rfloor\!\!\!\!\mod 2,&\lfloor 0: 2^{4-3}\rfloor\!\!\!\!\mod 2,&\lfloor 0: 2^{4-4}\rfloor\!\!\!\!\mod 2,&1)\\
               &=(&\!\!\! 0\!\! \mod 2,&\ 0\!\! \mod 2,&\ 0\!\! \mod 2,& 1)\\
               &=(&\!\!\!0,&0,&0,&1)\end{array}\]
               \[\begin{array}{llcccc}
\tau^{(4)}(1)\!\!\!\!&=(&\!\!\!\lfloor 1: 2^{4-2}\rfloor\!\!\!\!\mod 2,&\lfloor 1: 2^{4-3}\rfloor\!\!\!\!\mod 2,&\lfloor 1: 2^{4-4}\rfloor\!\!\!\!\mod 2,&1)\\
               &=(&\!\!\! 0\!\! \mod 2,&\ 0\!\! \mod 2,&\ 1\!\! \mod 2,& 1)\\
               &=(&\!\!\!0,&0,&1,&1)\end{array}\]
\[\begin{array}{llcccc}
\tau^{(4)}(2)\!\!\!\!&=(&\!\!\!\lfloor 2: 2^{4-2}\rfloor\!\!\!\!\mod 2,&\lfloor 2: 2^{4-3}\rfloor\!\!\!\!\mod 2,&\lfloor 2: 2^{4-4}\rfloor\!\!\!\!\mod 2,&1)\\
               &=(&\!\!\! 0\!\! \mod 2,&\ 1\!\! \mod 2,&\ 2\!\! \mod 2,& 1)\\
               &=(&\!\!\!0,&1,&0,&1)\vspace{0.1in}\\ 
\tau^{(4)}(3)\!\!\!\!&=(&\!\!\!\lfloor 3: 2^{4-2}\rfloor\!\!\!\!\mod 2,&\lfloor 3: 2^{4-3}\rfloor\!\!\!\!\mod 2,&\lfloor 3: 2^{4-4}\rfloor\!\!\!\!\mod 2,&1)\\
               &=(&\!\!\! 0\!\! \mod 2,&\ 1\!\! \mod 2,&\ 3\!\! \mod 2,&1)\\
               &=(&\!\!\!0,&1,&1,&1)\vspace{0.1in}\\
\tau^{(4)}(4)\!\!\!\!&=(&\!\!\!\lfloor 4: 2^{4-2}\rfloor\!\!\!\!\mod 2,&\lfloor 4: 2^{4-3}\rfloor\!\!\!\!\mod 2,&\lfloor 4: 2^{4-4}\rfloor\!\!\!\!\mod 2,&1)\\
               &=(&\!\!\! 1\!\! \mod 2,&\ 2\!\! \mod 2,&\ 4\!\! \mod 2,& 1)\\
               &=(&\!\!\!1,&0,&0,&1) \end{array}\]
\[\begin{array}{llcccc}
\tau^{(4)}(5)\!\!\!\!&=(&\!\!\!\lfloor 5: 2^{4-2}\rfloor\!\!\!\!\mod 2,&\lfloor 5: 2^{4-3}\rfloor\!\!\!\!\mod 2,&\lfloor 5: 2^{4-4}\rfloor\!\!\!\!\mod 2,&1)\\
               &=(&\!\!\! 1\!\! \mod 2,&\ 2\!\! \mod 2,&\ 5\!\! \mod 2,& 1)\\
               &=(&\!\!\!1,&0,&1,&1)\vspace{0.1in}\\
\tau^{(4)}(6)\!\!\!\!&=(&\!\!\!\lfloor 6: 2^{4-2}\rfloor\!\!\!\!\mod 2,&\lfloor 6: 2^{4-3}\rfloor\!\!\!\!\mod 2,&\lfloor 6: 2^{4-4}\rfloor\!\!\!\!\mod 2,&1) \\
               &=(&\!\!\! 1\!\! \mod 2,&\ 3\!\! \mod 2,&\ 6\!\! \mod 2,& 1)\\
               &=(&\!\!\!1,&1,&0,&1)\vspace{0.1in}\\
\tau^{(4)}(7)\!\!\!\!&=(&\!\!\!\lfloor 7: 2^{4-2}\rfloor\!\!\!\!\mod 2,&\lfloor 7: 2^{4-3}\rfloor\!\!\!\!\mod 2,&\lfloor 7: 2^{4-4}\rfloor\!\!\!\!\mod 2,&1)\\
               &=(&\!\!\! 1\!\! \mod 2,&\ 3\!\! \mod 2,&\ 7\!\! \mod 2,& 1)\\
               &=(&\!\!\!1,&1,&1,&1)   \end{array}\]
The non-trivial SEPs of $\Hes_{4}$ are listed below:
\[\begin{array}{l} 
\displaystyle \chi^{(4)}(0)=\varphi^{(4)}(\tau^{(4)}(0))=\varphi^{(4)}(0,0,0,1)=-h_{1,2}(-h_{2,3})(-h_{3,4})h_{4,1}=-h_{1,2}h_{2,3}h_{3,4}h_{4,1}\vspace{0.05in}\\
\displaystyle \chi^{(4)}(1)=
\varphi^{(4)}(\tau^{(4)}(1))=\varphi^{(4)}(0,0,1,1)=-h_{1,2}(-h_{2,3})h_{3,1}h_{4,4}=h_{1,2}h_{2,3}h_{3,1}h_{4,4}\vspace{0.05in}\\
\displaystyle \chi^{(4)}(2)=
\varphi^{(4)}(\tau^{(4)}(2))=\varphi^{(4)}(0,1,0,1)=-h_{1,2}h_{2,1}(-h_{3,4})h_{4,3}=h_{1,2}h_{2,1}h_{3,4}h_{4,3}\vspace{0.05in}\\
\displaystyle \chi^{(4)}(3)=\varphi^{(4)}(\tau^{(4)}(3))=\varphi^{(4)}(0,1,1,1)=-h_{1,2}h_{2,1}h_{3,3}h_{4,4}\vspace{0.05in}\\
\displaystyle \chi^{(4)}(4)=
\varphi^{(4)}(\tau^{(4)}(4))=\varphi^{(4)}(1,0,0,1)=h_{1,1}(-h_{2,3})(-h_{3,4})h_{4,2}=h_{1,1}h_{2,3}h_{3,4}h_{4,2}\vspace{0.05in}\\
\displaystyle \chi^{(4)}(5)=
\varphi^{(4)}(\tau^{(4)}(5))=\varphi^{(4)}(1,0,1,1)=h_{1,1}(-h_{2,3})h_{3,2}h_{4,4}=-h_{1,1}h_{2,3}h_{3,2}h_{4,4}\vspace{0.05in}\\
\displaystyle \chi^{(4)}(6)=
\varphi^{(4)}(\tau^{(4)}(6))=\varphi^{(4)}(1,1,0,1)=h_{1,1}h_{2,2}(-h_{3,4})h_{4,3}=-h_{1,1}h_{2,2}h_{3,4}h_{4,3}\vspace{0.05in}\\
\displaystyle \chi^{(4)}(7)=
\varphi^{(4)}(\tau^{(4)}(7))=\varphi^{(4)}(1,1,1,1)=h_{1,1}h_{2,2}h_{3,3}h_{4,4}.
\end{array}  \]
Thus,
\[\begin{array}{lll} \det(\Hes_4)\displaystyle =\sum_{m=0}^7 \chi^{(4)}(m)
\!\!\!&=&\!\!\! -h_{1,2}h_{2,3}h_{3,4}h_{4,1}+h_{1,2}h_{2,3}h_{3,1}h_{4,4}+h_{1,2}h_{2,1}h_{3,4}h_{4,3}-h_{1,2}h_{2,1}h_{3,3}h_{4,4}\vspace{0.05in}\\
&&\!\!\!+h_{1,1}h_{2,3}h_{3,4}h_{4,2} -h_{1,1}h_{2,3}h_{3,2}h_{4,4}- h_{1,1}h_{2,2}h_{3,4}h_{4,3}+h_{1,1}h_{2,2}h_{3,3}h_{4,4}.
\end{array}\]
The above results coincide with the determinant expansions derived from the Leibniz formula, yet excluding the trivial SEPs.
\section{The general solution of regular order LDEVCs} \label{sec:AscendingOrderDifferenceEquation}
We will finally describe the solution expressions in all three types of regular order LDEVCs (ascending-order, $ N $-order and unbounded-order), derived from the closed form of Hessenbergians.

As the solution matrices $\mbox{\boldmath$\rm \Xi$}  _n^{(i)}$, $\mbox{\boldmath$\rm P$}_n$, $\mbox{\boldmath$\rm G$}_n$ associated with the ascending order LDEVC (see section \ref{sec:SolutionsOfLinear DifferenceEquationsWithVariable CoefficientsInTermsOfHessenbergians}) are all in lower Hessenberg form, the determinants of the fundamental, particular and general solution matrices are all Hessenbergians. As a consequence, the formula (\ref{closed form}) is directly applicable to each solution determinant.

More specifically, the general solution of the ascending order LDEVC, is obtained by identifying the general solution matrix 
$\mbox{\boldmath$\rm G$}_n$ in (\ref{GSMat}) with the matrix:
\begin{equation} \label{Hessenberg Matrix3}
 \Hes_{n+1}=\left(\begin{array}{cccccc}
 h_{1,1}      &  h_{1,2}   &   0        & ... &   0           & 0  \\
 h_{2,1}      &  h_{2,2}   &   h_{2,3}  & ... &   0           & 0  \vspace{-0.05in}\\
 .          &  .        &   .        & ... &   .           & .  \vspace{-0.1in}\\
  .          &  .        &   .        & ... &   .           & .  \vspace{-0.1in}\\
      .          &  .        &   .        & ... &   .           & .\vspace{-0.05in}\\
h_{n, 1} & h_{n,2}& h_{n,3} & ... & h_{n, n} & h_{n, n+1} \\
  h_{n+1,1}      &  h_{n+1,2}   & h_{n+1,3}     & ... & h_{n+1, n} & h_{n+1, n+1}
\end{array}\right).
\end{equation}
In other words, the entries $h_{i,j}$ of $\Hes_{n+1}$ are assigned with the values: 
\[h_{i,j}=\left\{\begin{array}{cl}\displaystyle g_{i-1}\!-\!\sum_{k=0}^{N-1}a_{i-1,k}\ y_{k-N} & {\rm if} \ 1\le i\le n+1\ {\rm and}\ j=1, \\
a_{i-1,N+j-2} &  {\rm if} \ 1\le i\le n+1\ {\rm and}\ j=2,...,i+1, \vspace{0.1in}\\                      0          &  {\rm otherwise}.
\end{array}\right.\]
Applying the closed form (\ref{closed form}) to $\Hes_{n+1}$, the general solution of the ascending order LDEVC in (\ref{n-term general solution}) takes the following (closed) form:
\begin{equation} \label{closed form of general solution}
y_n=(-1)^n\frac{\displaystyle\sum_{m=0}^{2^{n}-1}\chi^{(n+1)}(m)}{\displaystyle\prod_{i=0}^{n}a_{i,i+N}}.
\end{equation}
We can further reduce (\ref{closed form of general solution}) by changing the 1st column entries of the general solution matrix (\ref{Hessenberg Matrix3}). In particular, we assign $\Hes_{n+1}=(h_{i,j})_{1\le i,j\le n+1}$ (or $\mbox{\boldmath$\rm G$}_n$) with the entries: 
\begin{equation}\label{new entries} h_{i,j}=\left\{\begin{array}{cl}\displaystyle (-1)^{n}\frac{\displaystyle g_{i-1}\!-\!\sum_{k=0}^{N-1}a_{i-1,k}\ y_{k-N}}{\displaystyle\prod_{i=0}^{n}a_{i,i+N}} & {\rm if} \ 1\le i\le n+1\ {\rm and}\ j=1, \vspace{0.1in}\\
a_{i-1,N+j-2} &  {\rm if} \ 1\le i\le n+1\ {\rm and}\ j=2,...,i+1, \vspace{0.1in}\\                      0          &  {\rm otherwise}
\end{array}\right.\end{equation}
Applying the multilinear property of determinants with respect to the 1st column of $\Hes_{n+1}$,  as defined in (\ref{new entries}), the general solution described in (\ref{n-term general solution}) can be expressed as a single Hessenbergian: 
\begin{equation} \label{General Solution Reduced}
y_n=\det(\Hes_{n+1}).
\end{equation}
Finally, the expression (\ref{closed form}), applied to (\ref{General Solution Reduced}), gives the  ascending order LDEVC general solution a more condensed, alternative to (\ref{closed form of general solution}), closed form:
\begin{equation} \label{closed form of general solution2}
y_n=\displaystyle\sum_{m=0}^{2^{n}-1}\chi^{(n+1)}(m).
\end{equation}

The $N$th order LDEVC is also associated with the solution matrices (fundamental, particular and general) $\mbox{\boldmath$\rm \Xi$}  _n^{(i)}$, $\mbox{\boldmath$\rm P$}_n$ and $\mbox{\boldmath$\rm G$}_n$, respectively. Its general solution is represented by the formulas (\ref{closed form of general solution}) (or (\ref{closed form of general solution2})). In this case, however, the solution matrices are even more sparse. In particular, the fundamental solution matrix $\mbox{\boldmath$\rm \Xi$}  _n^{(i)}$ is a band matrix in which additional zero entries are grouped at its bottom left corner. This produces additional zero, but in our terminology non-trivial, SEPs that are also included in the formula.

In the unbounded order case (where $N=0$), homogeneous solutions do not exist. Thus $\mbox{\boldmath$\rm G$}_n=\mbox{\boldmath$\rm P$}_n$ and the unique solution of the unbounded order LDEVC is derived from the closed form expression of the Hessenbergian $\det(\mbox{\boldmath$\rm P$}_n)$ in (\ref{n-term particular solution}).

\end{document}